\documentclass[12pt]{amsart}
\usepackage{amscd,amsmath,amssymb,amsthm,amsfonts,epsfig,graphics}
\usepackage{geometry}
\usepackage{graphicx}
\usepackage{mathrsfs,amssymb}
\usepackage{bm}
\setcounter{MaxMatrixCols}{30}

\parskip=4pt

\theoremstyle{plain}

\newtheorem{lemma}{Lemma}

\newtheorem{proposition}{Proposition}
\newtheorem{remark}{Remark}

\newtheorem{theorem}{Theorem}
\numberwithin{equation}{section}

\begin{document}
\title[IMPROVED CRITICAL EIGENFUNCTION RESTRICTION ESTIMATES IN 3D]{IMPROVED CRITICAL EIGENFUNCTION RESTRICTION ESTIMATES ON RIEMANNIAN MANIFOLDS WITH CONSTANT NEGATIVE CURVATURE}
\author{Cheng Zhang}
\address{Department of Mathematics\\
Johns Hopkins University\\
Baltimore, MD 21218, USA}
\email{czhang67@math.jhu.edu}
\date{}
\subjclass[2010]{Primary 58J51; Secondary 35A99, 42B37.}
\keywords{Eigenfunctions; Restriction estimates; Oscillatory integrals}
\begin{abstract}We show that one can obtain logarithmic improvements of $L^2$ geodesic restriction estimates for eigenfunctions on 3-dimensional compact Riemannian manifolds with constant negative curvature. We obtain a $(\log\lambda)^{-\frac12}$ gain for the $L^2$-restriction bounds, which improves the corresponding bounds of Burq, G\'erard and Tzvetkov \cite{burq}, Hu \cite{hu}, Chen and Sogge \cite{chensogge}. We achieve this by adapting the approaches developed by Chen and Sogge \cite{chensogge}, Blair and Sogge \cite{top}, Xi and the author \cite{xz}. We derive an explicit formula for the wave kernel on 3D hyperbolic space, which improves the kernel estimates from the Hadamard parametrix in Chen and Sogge \cite{chensogge}. We prove detailed oscillatory integral estimates with fold singularities by Phong and Stein \cite{ps} and use the Poincar\'e half-space model to establish bounds for various derivatives of the distance function restricted to geodesic segments on the universal cover $\mathbb{H}^3$.
\end{abstract}

\maketitle

\section{Introduction}
Let $(M,g)$ be a compact $n$-dimensional Riemannian manifold and let $\Delta_g$ be the associated Laplace-Beltrami operator. Let $e_\lambda$ denote the $L^2$-normalized eigenfunction
\[-\Delta_g e_\lambda=\lambda^2e_\lambda,\]
so that $\lambda\ge0$ is the eigenvalue of the operator $\sqrt{-\Delta_g}.$ A classical result on the $L^p$-estimates of the eigenfunctions is due to Sogge \cite{Slp}:
\begin{equation}\label{slp}\|e_\lambda\|_{L^p{(M)}}\le C\lambda^{\delta{(p)}},\end{equation}
where $2\le p\le\infty$ and \[\delta(p)=\begin{cases}
\frac{n-1}{2}(\frac{1}{2}-\frac{1}{p}), &2\le p\le p_c,\\ n(\frac{1}{2}-\frac{1}{p})-\frac{1}{2}, &p_c\le p\le\infty, \end{cases}\]if we set $p_c=\frac{2n+2}{n-1}$.
These estimates \eqref{slp} are saturated on the round sphere $S^n$ by zonal functions for $p\ge p_c$ and for $2<p\le p_c$ by the highest weight spherical harmonics. However, it is expected that \eqref{slp} can be improved for generic Riemannian manifolds. It was known that one can get log improvements for $\|e_\lambda\|_{L^p(M)},\ p_c<p\le \infty,$ when $M$ has nonpositive sectional curvature. Indeed, B\'erard's results \cite{berard} on improved remainder term bounds for the pointwise Weyl law imply that
\[\|e_\lambda\|_{L^\infty(M)}\le C\lambda^\frac{n-1}2({\log\lambda})^{-\frac12}\|e_\lambda\|_{L^2(M)}.\]Recently, Hassell and Tacy \cite{hstc} obtained a similar $(\log\lambda)^{-\frac12}$ gain for all $p>p_c$.

Similar $L^p$-estimates have been established for the restriction of eigenfunctions to geodesic segments.  Let $\it\Pi$ denotes the space of all unit-length geodesics.  The works \cite{burq}, \cite{hu}, \cite{chensogge}(see also \cite{rez} for earlier results on hyperbolic surfaces) showed that
\begin{equation}\label{rest}\sup\limits_{\gamma\in\it\Pi}\Big(\int_\gamma|e_\lambda|^p\,ds\Big)^\frac1p\le C\lambda^{\sigma(n,p)}\|e_\lambda\|_{L^2(M)},
\end{equation}
where \begin{equation}\sigma(2,p)=
\begin{cases}
\frac14, &2\le p\le 4,\\ \frac12-\frac1p, &4\le p\le\infty, \end{cases}
\end{equation}\begin{equation}\sigma(n,p)=\frac{n-1}2-\frac1p,\ {\rm if}\ {p}\ge2\ {\rm and}\ n\ge3.
\end{equation}
It was known that these estimates are saturated by the highest weight spherical harmonics when $n\ge3$ on round sphere $S^n$, as well as in the case of $2\le p\le 4$ when $n=2$, while in this case the zonal functions saturate the bounds for $p\ge 4$.

There are considerable works towards improving \eqref{rest} for the 2-dimensional manifolds with nonpositive curvature. Chen \cite{chen} proved a $(\log\lambda)^{-\frac12}$ gain for all $p>4$. Sogge and Zelditch \cite{sz} and Chen and Sogge \cite{chensogge} showed that one can improve \eqref{rest} for $2\le p\le4$, in the sense that
\begin{equation}
\sup\limits_{\gamma\in\it\Pi}\Big(\int_\gamma|e_\lambda|^p\,ds\Big)^\frac1p=o(\lambda^\frac14).
\end{equation}
Recently, using the Toponogov's comparison theorem, Blair and Sogge \cite{top} obtained log improvements for $p=2$:
\begin{equation}\label{top1}
\sup\limits_{\gamma\in\it\Pi}\Big(\int_\gamma|e_\lambda|^2\,ds\Big)^\frac12\le C\lambda^\frac14(\log\lambda)^{-\frac14}\|e_\lambda\|_{L^2(M)}.
\end{equation}
Inspired by the works \cite{chensogge}, \cite{top}, \cite{loglog}, Xi and the author \cite{xz} was able to deal with the other endpoint $p=4$ and proved a $(\log\log\lambda)^{-\frac18}$ gain for surfaces with nonpositive curvature and a $(\log\lambda)^{-\frac14}$ gain for hyperbolic surfaces
\begin{equation}\label{xz1}
\sup\limits_{\gamma\in\it\Pi}\Big(\int_\gamma|e_\lambda|^4\,ds\Big)^\frac14\le C\lambda^\frac14(\log\lambda)^{-\frac14}\|e_\lambda\|_{L^2(M)}.
\end{equation}
 In the 3-dimensional case, under the assumption of nonpositive curvature, Chen \cite{chen} also proved a $(\log\lambda)^{-\frac12}$ gain for all $p>2$. With the assumption of constant negative curvature, Chen and Sogge \cite{chensogge} showed that \begin{equation}\label{3d}
\sup\limits_{\gamma\in\it\Pi}\Big(\int_\gamma|e_\lambda|^2\,ds\Big)^\frac12 = o(\lambda^{\frac12}).
\end{equation}
Moreover, Hezari and Rivi${\rm \grave{e}}$re \cite{heri} and Hezari \cite{hez} used quantum ergodic methods to get logarithmic improvements at critical exponents in the cases above on negatively curved manifolds for a density one subsequence.

The purpose of this paper is to prove a $(\log\lambda)^{-\frac12}$ gain for the $L^2$ geodesic restriction bounds on 3-dimensional compact Riemannian manifolds with constant negative curvature. We mainly follow the approaches developed in \cite{chensogge}, \cite{top}, \cite{xz}. We derive an explicit formula for the wave kernel on $\mathbb H^3$, which is one of the key steps to get the $(\log\lambda)^{-\frac12}$ gain. We shall lift all the calculations to the universal cover $\mathbb H^3$ and then use the Poincar\'e half-space model to derive the explicit formulas of the mixed derivatives of the distance function restricted to the unit geodesic segments. Then we decompose the domain of the distance function and compute the bounds of various mixed derivatives explicitly, since it was observed in \cite{chensogge} and \cite{xz} that the desired kernel estimates follow from the oscillatory integral estimates and the estimates on the mixed derivatives. Moreover, whether one can get similar logarithmic improvements on 3-dimensional manifolds with nonpositive curvature is still an interesting open problem. One of the technical difficulties is that these manifolds may not have sufficiently many totally geodesic submanifolds (see \cite[p.458]{chensogge}). Throughout this paper, we shall assume that the injectivity radius of $M$ is sufficiently large, and fix $\gamma$ to be a unit length geodesic segment parameterized by arclength.
\begin{theorem}\label{thm1}
Let $(M,g)$ be a 3-dimensional compact Riemannian manifold of constant negative curvature, let $\gamma\subset M$ be a fixed unit-length geodesic segment. Then for $\lambda\gg 1$, there is a constant $C$ such that
\begin{equation}\label{key}\|e_\lambda\|_{L^2(\gamma)}\le C\lambda^\frac12(\log\lambda)^{-\frac12}\|e_\lambda\|_{L^2(M)}.\end{equation}
Moreover, if $\it\Pi$ denotes the set of unit-length geodesics, there exists a uniform constant $C=C(M,g)$ such that
\begin{equation}\label{uniform}\sup\limits_{\gamma\in\it\Pi}\Big(\int_\gamma|e_\lambda|^2\,ds\Big)^\frac12\le C\lambda^\frac12(\log\lambda)^{-\frac12}\|e_\lambda\|_{L^2(M)}.
\end{equation}
\end{theorem}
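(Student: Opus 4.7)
The plan is to adapt the Blair--Sogge / Chen--Sogge / Xi--Zhang strategy to the 3D constant-curvature setting, with the $(\log\lambda)^{-1/2}$ gain extracted by performing the spectral cutoff at the sub-Ehrenfest scale $T\simeq c\log\lambda$. I would introduce the regularized spectral projector $\chi_\lambda = \rho(T(\lambda-P))$, where $P=\sqrt{-\Delta_g}$ and $\rho\in\mathcal{S}(\mathbb{R})$ satisfies $\rho(0)=1$ with $\widehat{\rho}$ supported in a small interval. Since $\chi_\lambda e_\lambda = e_\lambda$, the estimate \eqref{key} is reduced to
$\|\chi_\lambda\|_{L^2(M)\to L^2(\gamma)}\le C\lambda^{1/2}(\log\lambda)^{-1/2}$,
and a standard $TT^*$ duality further reduces this to bounding the $L^2(\gamma)\to L^2(\gamma)$ operator norm, by $C\lambda(\log\lambda)^{-1}$, of the operator whose kernel is
$K_\lambda(s,t)=\sum_j |\rho(T(\lambda-\lambda_j))|^2 e_j(\gamma(s))\overline{e_j(\gamma(t))}$.
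Writing this via the Fourier transform expresses $K_\lambda$ as an integral over $|\tau|\lesssim T$ of the half-wave propagator $e^{-i\tau P}$ restricted to $\gamma\times\gamma$, which is the object to analyze.

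Next I would lift the wave kernel to the universal cover $\mathbb{H}^{3}$ and apply a Poisson-type summation over the deck transformation group $\Gamma$. The key new ingredient is to replace the Hadamard parametrix used in Chen--Sogge \cite{chensogge} by a closed-form representation of the wave kernel on $\mathbb{H}^3$: because $\mathbb{H}^3$ is odd-dimensional, the substitution $v(\tau,r)=(\sinh r)u(\tau,r)$ turns the radial wave equation into a one-dimensional wave equation, yielding a sharp Huygens formula supported on $r=|\tau|$. After summation this produces, for each $\alpha\in\Gamma$, a contribution of the form
$K_\alpha(s,t)\sim e^{i\lambda r_\alpha(s,t)}\, b(r_\alpha(s,t))/\sinh r_\alpha(s,t)$,
with $r_\alpha(s,t)=d_{\mathbb{H}^3}(\tilde\gamma(s),\alpha\tilde\gamma(t))$ and $b$ a smooth cutoff to $|r|\lesssim T$. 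The identity element $\alpha=\mathrm{id}$ gives the short-time local contribution, treated by the standard parametrix and responsible for the principal $\lambda$ factor; the gain $(\log\lambda)^{-1}$ arises from the $1/T$ normalization in this local piece.

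For the global terms ($\alpha\neq\mathrm{id}$), I would reduce to oscillatory integral bounds with phase $r_\alpha(s,t)$ and amplitude $1/\sinh r_\alpha$. Working in the Poincar\'e half-space model allows explicit computation of the restricted distance function $r_\alpha(s,t)$ and all mixed partial derivatives $\partial_s^a\partial_t^b r_\alpha$ needed for stationary phase. The generic situation is that the mixed Hessian $\partial_s\partial_t r_\alpha$ is nondegenerate, giving the expected $\lambda^{-1/2}$ dispersive decay. Degeneracies occur in geometrically identifiable regimes (the lifted segments $\tilde\gamma$ and $\alpha\tilde\gamma$ becoming nearly parallel, coplanar, or meeting at a small angle), and on the corresponding subsets the phase has fold-type singularities, where I would apply the Phong--Stein oscillatory integral estimates \cite{ps} after a suitable dyadic decomposition of the $(s,t)$-domain.

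The principal obstacle is the last step. The amplitude decay $\sim e^{-r_\alpha}$ almost exactly balances the volume growth $\#\{\alpha\in\Gamma:r_\alpha\approx R\}\lesssim e^{2R}$ in $\mathbb{H}^3$, so the dispersive gain from the oscillatory phase has to absorb essentially everything, including the $e^{R}$ residual and the lack of uniformity when $R$ approaches $T=c\log\lambda$. Executing the fold analysis in three dimensions is qualitatively harder than on surfaces: the two lifted geodesics are generically skew, producing several distinct degeneration regimes, each requiring tailored lower bounds on the appropriate derivative of $r_\alpha$ computed in the half-space coordinates. Obtaining bounds that are uniform across all $|\tau|\lesssim\log\lambda$ and that survive the summation over exponentially many $\alpha$ without eroding the $(\log\lambda)^{-1}$ improvement is the technical core of the proof.
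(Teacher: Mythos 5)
Your overall strategy matches the paper quite closely: the $TT^*$ reduction with $T\simeq c\log\lambda$, lifting to $\mathbb{H}^3$ and exploiting the explicit wave kernel there (the paper uses the shifted Laplacian $L=\Delta_{\tilde g}+1$, for which the $\sinh$ substitution you mention gives a clean $\delta$-distribution; the unshifted $\cos t\sqrt{-\Delta_{\tilde g}}$ then has an extra Bessel-convolution tail which must also be controlled), the Poincar\'e half-space model to compute $\phi_{st}''$ explicitly, and Phong--Stein fold estimates for the remaining oscillatory integrals. You also correctly identify the central tension: the amplitude decay $\sim e^{-r}$ versus the volume growth $\sim e^{2r}$ of $\#\{\alpha: d(0,\alpha(0))\approx r\}$ in $\mathbb{H}^3$.

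However, there is a genuine gap in your plan: you intend to treat \emph{every} nonidentity $\alpha$ by oscillatory integral estimates with phase $r_\alpha(s,t)$, and you believe the degenerate configurations (nearly parallel lifts) reduce to fold singularities where Phong--Stein still helps. That is not the case. When $\alpha(\tilde\gamma)$ stays close to $\tilde\gamma$ — e.g.\ when $\alpha$ is nearly a translation along $\tilde\gamma$ — the mixed Hessian $\phi_{st}''$ can vanish identically or be uniformly $O(e^{-cr})$ on the whole unit square, not merely on a codimension-one fold set; no oscillatory integral theorem extracts a gain from a phase with vanishing mixed Hessian. In fact, the lower bounds in the paper's Lemmas~\ref{lemma0} and~\ref{lemma1} on $|\phi_{st}''|$ are proved \emph{only under the hypothesis} $\alpha\notin\Gamma_{{\rm T}_R(\tilde\gamma)}$; they genuinely fail for tube elements. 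The missing ingredient is the decomposition $\Gamma=\Gamma_{{\rm T}_R(\tilde\gamma)}\cup\Gamma_{{\rm T}_R(\tilde\gamma)}^c$ together with the improved tube counting \eqref{tubenum}: the number of $\alpha\in\Gamma_{{\rm T}_R(\tilde\gamma)}$ with $d(0,\alpha(0))\in[2^k,2^{k+1}]$ is only $O(2^k)$, not $O(e^{2\cdot 2^k})$. For these tube elements one uses the direct kernel bound of Lemma~\ref{lemmaK} (which is your explicit-wave-kernel observation made quantitative), and the sum $\sum_k 2^k e^{-2^k/2}$ converges with room to spare; this gives $\|S_\lambda^{tube}\|\lesssim\lambda T^{-1}$. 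Only for $\alpha$ outside the tube is Phong--Stein applied, and there the exponential number $e^{CT}$ of terms and the $e^{CT}$ constants in the derivative bounds are beaten by the dispersive power gain $\lambda^{-1/4}$ precisely because $T=c\log\lambda$ with $c$ small. Without the tube dichotomy and the polynomial counting inside the tube, your plan stalls exactly at the obstacle you identified, and the ``restriction to a fold'' you propose for the near-parallel $\alpha$ is not available.
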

\begin{remark}\label{rem1}{\rm As a final remark, we must mention a recently posted work of Blair \cite{mb}. He was able to use geometric tools different from ours to establish bounds on the mixed partials of the distance function on the covering manifold restricted to geodesic segements. Then he independently proved \eqref{xz1} for surfaces with general nonpositive curvature and a $(\log\lambda)^{-\frac12+\epsilon}$ gain for \eqref{3d} on 3-dimensional manifolds with constant negative curvature. Moreover, recently Professor C. Sogge pointed out to the author that one may also get a similar $(\log \lambda)^{-\frac12}$ gain for the $L^4$ geodesic restriction estimates on surfaces with strictly negative curvatures by using the G{\"u}nther's comparison theorem and the Hadamard parametrix.}
\end{remark}

\section{Preliminaries}
We start with some standard reductions. Since the uniform bound \eqref{uniform} follows from a standard compactness argument in \cite[p.452]{chensogge}, we only need to prove \eqref{key}. Let $T\gg1$. Let $\rho\in {\mathcal{S}}(\mathbb R)$ such that $\rho(0)=1$ and ${\rm supp}\ \hat\rho\subset[-1/2,1/2]$, then it is clear that the operator $\rho(T(\lambda-\sqrt { - {\Delta _g}}))$ reproduces eigenfunctions, namely
\[\rho(T(\lambda-\sqrt { - {\Delta _g}}))e_\lambda=e_\lambda.\]
Let $\chi=|\rho|^2$. After a standard $TT^*$ argument, we only need to estimate the norm
\begin{equation}\label{cmpmain}
\|\chi(T(\lambda-\sqrt { - {\Delta _g}}))\|_{L^2(\gamma)\rightarrow L^{2}(\gamma)}.
\end{equation}
Choose a bump function $\beta \in C_0^\infty(\mathbb{R})$ satisfying
\[\beta(\tau)=1 \quad {\rm for} \quad |\tau|\le3/2,\quad  {\rm and} \quad \beta(\tau)=0, \quad |\tau|\ge2.\]
By the Fourier inversion formula, we may represent the kernel of the operator $\chi(T(\lambda-\sqrt { - {\Delta _g}}))$ as an operator valued integral
\[\begin{gathered}
  \chi (T(\lambda  - \sqrt { - {\Delta _g}}))(x,y) = \frac{1}
{{2\pi T}}\int {\beta (\tau )\hat \chi (\tau /T){e^{i\lambda \tau }}({e^{ - i\tau \sqrt { - {\Delta _g}}}})(x,y)d\tau }  \hfill \\
   + \frac{1}
{{2\pi T}}\int {(1 - \beta (\tau ))\hat \chi (\tau /T){e^{i\lambda \tau }}({e^{ - i\tau \sqrt { - {\Delta _g}}}})(x,y)d\tau }  = {K_0}(x,y) + {K_1}(x,y). \hfill \\
\end{gathered}
\]
Then one may use a parametrix to estimate the norm of the integral operator associated with the kernel $K_0(\gamma(t),\gamma(s))$ (see \cite[p.455]{chensogge})
\begin{equation}\label{K_0}\|K_0\|_{L^2[0,1]\rightarrow L^{2}[0,1]}\le  C{\lambda}{T^{ - 1}}.\end{equation}
Since the kernel of $\chi (T(\lambda  + \sqrt { - {\Delta _g}}))$ is $O(\lambda^{-N})$ with constants independent of $T$, by Euler's formula we are left to consider the integral operator $S_\lambda$:
\begin{equation}{S_\lambda }h(t) = \frac{1}
{{\pi T}}\int_{ - \infty }^\infty  {\int_{ 0}^{1} {(1 - \beta (\tau ))\hat \chi (\tau /T){e^{i\lambda \tau }}(\cos \tau \sqrt { - {\Delta _g}})(\gamma (t),\gamma (s))h(s)dsd\tau.} }
\end{equation}
As in \cite{chensogge}, \cite{top}, \cite{xz},  we use the Hadamard parametrix and the Cartan-Hadamard theorem to lift the calculations up to the universal cover $(\mathbb{R}^3,\tilde g)$ of $(M,g)$. Let $\Gamma$ denote the group of deck transformations preserving the associated covering map $\kappa :{\mathbb{R}^3} \to M$ coming from the exponential map from $\gamma(0)$ associated with the metric $g$ on $M$. The metric $\tilde g$ is its pullback via $\kappa$. Choose also a Dirchlet fundamental domain, $D \simeq M$, for $M$ centered at the lift $\tilde \gamma(0)$ of $\gamma(0)$. Let $\tilde \gamma(t),\ t\in \mathbb{R}$, satisfy $\kappa(\tilde\gamma(t))=\gamma(t)$, where $\gamma$ is the unit speed geodesic containing the geodesic segment $\{\gamma(t): \ t\in [0,1]\}$. Then $\tilde \gamma(t)$ is also a geodesic parameterized by arclength. We measure the distances in $(\mathbb{R}^3,\tilde g)$ using its Riemannian distance function $d_{\tilde g}(\ \cdot\ ,\ \emph{}\cdot\ )$. Moreover, we recall that if $\tilde x$ denotes the lift of $x\in M$ to $D$, then \[(\cos t\sqrt { - {\Delta _g}} )(x,y) = \sum\limits_{\alpha  \in \Gamma } {(\cos t} \sqrt { - {\Delta _{\tilde g}}} )(\tilde x,\alpha (\tilde y)).\]
Hence for $t\in [0,1]$,
\[{S_\lambda }h(t) = \frac{1}
{{\pi T}}\sum\limits_{\alpha  \in \Gamma } {\int_{\mathbb{R}}  {\int_{ 0}^{1} {(1 - \beta (\tau ))\hat \chi (\tau /T){e^{i\lambda \tau }}(\cos \tau \sqrt { - {\Delta _{\tilde g}}} )(\tilde \gamma (t),\alpha (\tilde \gamma (s)))h(s)\,dsd\tau }.} }\]
As in \cite{top} and \cite{xz}, we denote the $R$-tube about the infinite geodesic $\tilde\gamma$ by
 \begin{equation}\label{tube}{{\rm T}_R}(\tilde \gamma ) = \{ (x,y,z) \in {\mathbb{R}^3}:{d_{\tilde g}}((x,y,z),\tilde \gamma ) \le R\}\end{equation}
and \[{\Gamma _{{{\rm T}_R}(\tilde \gamma )}} = \{ \alpha  \in \Gamma :\alpha (D) \cap {{\rm T}_R}(\tilde \gamma ) \ne \emptyset \}. \]
From now on we fix $R\approx {\rm Inj} M$. We will see that ${{\rm T}_R}(\tilde \gamma )$ plays a key role in the proof of Lemma \ref{lemma1}. Then we decompose the sum
\[{S_\lambda }h(t) = S_\lambda ^{tube}h(t) + {S_\lambda^{osc} }h(t) = \sum\limits_{\alpha  \in {\Gamma _{{{\rm T}_R}(\tilde \gamma )}}}{{S_{\lambda,\alpha}^{tube}h(t)}}  +  \sum\limits_{\alpha  \notin {\Gamma _{{{\rm T}_R}(\tilde \gamma )}}}{{S_{\lambda,\alpha}^{osc}h(t)}}, t\in[0,1]. \]
Then by the finite propagation speed property and $\hat \chi(\tau)=0$ if $|\tau|\ge1$, we have
\[{d_{\tilde g}}(\tilde \gamma (t),\alpha (\tilde \gamma (s)))\le T, s,t\in[0,1].\]
As observed in \cite[p.11]{top}, \begin{equation}\label{tubenum}\# \{ \alpha  \in {\Gamma _{{{\rm T}_R}(\tilde \gamma )}}:{d_{\tilde g}}(0,\alpha (0)) \in [{2^k},{2^{k + 1}}]\}  \le C{2^k}.\end{equation}
Thus the number of nonzero summands in $S_\lambda ^{tube}h(t)$ is $O(T)$ and in $S_\lambda ^{osc}h(t)$ is $O(e^{CT})$.

Given $\alpha \in \Gamma$ set with $s,t\in[0,1]$
\[K_\alpha(t,s) = \frac{1}
{{\pi T}}\int_{ - T}^T {(1 - \beta (\tau ))\hat \chi (\tau /T){e^{i\lambda \tau }}(\cos \tau \sqrt { - {\Delta _{\tilde g}}} )(\tilde \gamma (t),\alpha (\tilde \gamma (s)))d\tau }.\]
When $\alpha=Identity$, one can use the Hadamard parametrix to prove the same bound as \eqref{K_0} (see e.g. \cite{chen}, p. 9)
\begin{equation}\label{KId} \|K_{\rm Id}\|_{L^2[0,1]\rightarrow L^{2}[0,1]}\le  C{\lambda}{T^{ - 1}}.\end{equation}
If $\alpha \ne Identity$, we set $\phi (t,s) = {d_{\tilde g}}(\tilde \gamma (t),\alpha (\tilde \gamma (s))), s,t\in[0,1].$
Then  by finite propagation speed and $\alpha \ne Identity$, we have \begin{equation}2\le \phi(t,s)\le T, \quad {\rm if} \quad s,t\in[0,1].\end{equation}
As in \cite[p.456]{chensogge}, one may use the Hadamard parametrix and stationary phase to show that $|K_\alpha(t,s)|\le C\lambda T^{-1}r^{-1}+e^{CT}$, where $r=d_{\tilde g}(\tilde\gamma(t), \alpha(\tilde\gamma(s)))$. However, we may get a much better estimate for $K_\alpha$. To see this, we need to derive the explicit formula of the wave kernel on hyperbolic space. Without loss of generality, we may assume that $(M,g)$ has constant negative curvature $-1$, which implies that the covering manifold $(\mathbb{R}^3, \tilde g)$ is the hyperbolic space $\mathbb{H}^3$. If we denote the shifted Laplacian operator by \[L= \Delta _{\tilde g}+\frac{(n-1)^2}{4}=\Delta _{\tilde g}+1\quad \quad ({\rm for}\ n=3),\]which has the property Spec$(-L)=[0, \infty)$, then there are exact formulas for various functions of $L$ (see e.g. \cite[Chapter 8, (5.15)]{taylor}). Indeed,
\[h(\sqrt{-L})\delta_y(x)=-\frac1{(2\pi)^{3/2}}\frac1{\sinh r}\frac{\partial}{\partial r}\hat{h}(r),\]where $\hat{h}$ is the Fourier transform defined by
\[\hat{h}(r)=\frac1{\sqrt{2\pi}}\int_{-\infty}^{\infty}h(k)e^{-irk}dk.\]
If $h(k)=\frac{\sin(tk)}{k}$, then $\hat{h}(r)=\frac{\sqrt{2\pi}}{2}\textbf{1}_{\{r\le |t|\}}.$ Hence, for $t>0$,
\begin{equation}\label{shiftL}\frac{\sin t\sqrt{-L}}{\sqrt{-L}}\delta_y(x)=\frac{\delta(t-r)}{4\pi\sinh r},\end{equation}
where $x,\ y\in \mathbb{H}^3$ and $r=d_{\tilde g}(x,y)$. Differentiating it yields
\begin{equation}\label{cosL}\cos t\sqrt{-L}\ \delta_y(x)=\frac{\delta'(t-r)}{4\pi\sinh r}.\end{equation} Recall the following relation between $L$ and $\Delta _{\tilde g}$ (see e.g. \cite[Proposition 2.1]{mt})
\begin{equation}\label{relation}\cos t\sqrt{-\Delta _{\tilde g}}=\cos t\sqrt{-L}-t\int_0^t{\frac{J_1(\sqrt{t^2-s^2})}{\sqrt{t^2-s^2}}\cos s\sqrt{-L}ds},\end{equation}
where $J_1(v)$ is the Bessel function
\[J_1(v)=\sum_{k=0}^{\infty}{\frac{(-1)^k}{k!(k+1)!}\Big(\frac{v}{2}\Big)^{2k+1}}.\]
We plug \eqref{cosL} into the relation \eqref{relation} to see that for $t>0$,
\[\cos t\sqrt{-\Delta _{\tilde g}}\ \delta_y(x)=\frac{\delta'(t-r)}{4\pi\sinh r}-t\int_0^t{\frac{J_1(\sqrt{t^2-s^2})}{\sqrt{t^2-s^2}}\frac{\partial_s\delta(s-r)}{4\pi\sinh r}ds},\]
Thus, integrating by parts and noting that $\cos t\sqrt{-\Delta _{\tilde g}}$ is even in $t$, we get the following explicit formula for the wave kernel ``$\cos t\sqrt{ - {\Delta _{\tilde g}}}(x,y)$'' on $\mathbb{H}^3$
\begin{equation}\label{wavekernel}\cos t\sqrt{ - {\Delta _{\tilde g}}}\ \delta_y(x)=\frac1{4\pi\sinh r}\Big[\delta'(|t|-r)-J'_1(0)|t|\delta(|t|-r)-\frac{r|t|G'(\sqrt{t^2-r^2})}{\sqrt{t^2-r^2}}{\textbf{1}}_{\{r\le |t|\}}\Big],\end{equation}
where $t\in {\mathbb{R}}\setminus \{0\}$, and $G(v)=J_1(v)/v$ is an entire function of $v^2$, satisfying
\begin{equation}\label{Gv}G(v)\sim Cv^{-3/2}\cos\Big(v-\frac{3\pi}{4}\Big)+\cdot\cdot\cdot, \ {\rm as}\ v\to +\infty.\end{equation}
\begin{lemma}\label{lemmaK}If $\alpha\ne Identity$, we have
\[|K_\alpha(t,s)|\le C\lambda T^{-1}e^{-r/2},\ {\rm for}\ t,s\in[0,1],\]
where $r=d_{\tilde g}(\tilde\gamma(t), \alpha(\tilde\gamma(s)))\ge1$ and $C$ is a constant independent of $T$ and $r$.
\end{lemma}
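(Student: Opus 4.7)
The plan is to substitute the explicit wave kernel formula \eqref{wavekernel} into the definition of $K_\alpha$ and split into three contributions
\[K_\alpha(t,s) = K_\alpha^{(1)}(t,s) + K_\alpha^{(2)}(t,s) + K_\alpha^{(3)}(t,s),\]
corresponding respectively to the $\delta'(|\tau|-r)$, $|\tau|\delta(|\tau|-r)$, and $\mathbf 1_{\{r\le|\tau|\}}$ terms. Since the kernel is even in $\tau$, I would fold the $\tau$-integration to $[0,T]$ and replace $e^{i\lambda\tau}$ by $2\cos(\lambda\tau)$. For $K_\alpha^{(1)}$, one integration by parts against $\delta'(\tau-r)$ transfers a derivative onto the smooth factor $(1-\beta(\tau))\hat\chi(\tau/T)\cos(\lambda\tau)$; the dominant contribution comes from differentiating $\cos(\lambda\tau)$, giving $|K_\alpha^{(1)}|\lesssim \lambda/(T\sinh r)$. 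Direct evaluation of $K_\alpha^{(2)}$ at $\tau=r$ yields $|K_\alpha^{(2)}|\lesssim r/(T\sinh r)$. Using $1/\sinh r\lesssim e^{-r}$ for $r\ge 1$ together with the elementary bound $re^{-r/2}\le C$, both pieces are already dominated by $C\lambda T^{-1}e^{-r/2}$.

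The main work is the oscillatory piece. The substitution $u=\sqrt{\tau^2-r^2}$ gives $u\,du=\tau\,d\tau$, so the factor $r\tau/\sqrt{\tau^2-r^2}\,d\tau$ collapses to $r\,du$, and
\[K_\alpha^{(3)}(t,s) = -\frac{r}{2\pi^2 T\sinh r}\,I,\qquad I=\int_0^{\sqrt{T^2-r^2}}a(u)\,G'(u)\cos\bigl(\lambda\sqrt{u^2+r^2}\bigr)\,du,\]
with $a(u)=(1-\beta(\sqrt{u^2+r^2}))\hat\chi(\sqrt{u^2+r^2}/T)$ smooth and uniformly bounded. I would split the $u$-integral at $u=1$. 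On $[0,1]$, since $G$ is entire in $v^2$ one has $G'(u)=u\,\tilde G(u)$ with $\tilde G$ smooth, and the identity $u\cos(\lambda\sqrt{u^2+r^2})\,du = \lambda^{-1}\sqrt{u^2+r^2}\,d\!\bigl(\sin(\lambda\sqrt{u^2+r^2})\bigr)$ lets one integration by parts gain a factor of $\lambda^{-1}$; the resulting boundary and bulk terms are each $\lesssim r/\lambda$. On $[1,\sqrt{T^2-r^2}]$ the Hankel-type asymptotic \eqref{Gv} yields a decomposition $G'(u)=e^{iu}A(u)+e^{-iu}B(u)$ with $A,B$ symbols of order $-3/2$. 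The four resulting phases $\pm\lambda\sqrt{u^2+r^2}\pm u$ have derivatives $\pm\lambda u/\sqrt{u^2+r^2}\pm 1$; since the only candidate stationary point $u_0=r/\sqrt{\lambda^2-1}$ lies well below $1$ under the working hypothesis $r\le T$ with $T$ at most a large multiple of $\log\lambda$, these derivatives have size at least $\lambda/r$ throughout the range. A single non-stationary integration by parts, combined with $\int_1^{\infty}u^{-3/2}\,du<\infty$, then yields $|I|\lesssim r/\lambda$.

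Putting everything together with $1/\sinh r\lesssim e^{-r}$ gives $|K_\alpha^{(3)}|\lesssim T^{-1}\lambda^{-1}r^2 e^{-r}\lesssim \lambda T^{-1}e^{-r/2}$, where the last step uses that $r^2 e^{-r/2}$ is uniformly bounded and $r\le T\lesssim\log\lambda$. The main obstacle is the careful handling of $I$ across the transition $u\sim 1$: near the origin one must exploit $G'(0)=0$ to pull out an extra factor of $u$ and absorb a power of $\lambda$, while away from the origin one must verify that the stationary point of $\lambda\sqrt{u^2+r^2}-u$ genuinely lies outside the integration range so that the non-stationary integration by parts suffices. The remaining ingredients --- the $e^{-r}$ decay of $1/\sinh r$ and the elementary $\delta$-function evaluations --- follow directly from \eqref{wavekernel}.
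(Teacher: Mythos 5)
Your decomposition into the $\delta'$, $\delta$, and indicator-function pieces, and your treatment of the first two, match the paper's proof. The divergence is in the third piece, where you take a genuinely different and more elaborate route. The paper simply takes absolute values in $\tau$: since $G'(v)/v$ is uniformly bounded (as $G$ is entire in $v^2$) and $|G'(v)/v|\lesssim v^{-5/2}$ for large $v$ by the Hankel asymptotic \eqref{Gv}, the substitution $\rho=|\tau|-r$ gives a bound $\lesssim r(1+r)$ for the third integral with no use of the oscillation; dividing by $T\sinh r$ and observing that $(r+r^2)e^{-r/2}$ is bounded finishes the proof. You instead change variables to $u=\sqrt{\tau^2-r^2}$ and play $\cos(\lambda\sqrt{u^2+r^2})$ against the Hankel oscillation of $G'$ to extract an extra factor of $\lambda^{-1}$.

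That extra gain is not needed for the stated bound, and it comes at a cost: your non-stationary phase step on $[1,\sqrt{T^2-r^2}]$ requires the stationary point $u_0=r/\sqrt{\lambda^2-1}$ of the phase $\lambda\sqrt{u^2+r^2}-u$ to lie outside the integration range, which you justify by ``$r\le T\lesssim\log\lambda$''. But that relation comes from the choice $T=c\log\lambda$ made later in the proof of Theorem \ref{thm1}; it is not a hypothesis of Lemma \ref{lemmaK}, which is stated for arbitrary $T$, $\lambda$, $r$, and indeed is proved in the paper before any relation between them is imposed. As written, your argument is not self-contained. The simplest repair makes the whole oscillatory analysis superfluous: the crude bound $|I|\le\|a\|_\infty\int_0^\infty|G'(u)|\,du\lesssim\int_0^\infty\min(u,u^{-3/2})\,du<\infty$ already gives $|K_\alpha^{(3)}|\lesssim r/(T\sinh r)\lesssim\lambda T^{-1}e^{-r/2}$ unconditionally, which is precisely the observation underlying the paper's simpler proof.
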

Using this lemma and \eqref{tubenum}, we get
\begin{equation}\Big|\sum\limits_{\alpha  \in {\Gamma _{{{\rm T}_R}(\tilde \gamma )}}\setminus \{\rm Id\}}{{K_{\alpha}(t,s)}}  \Big| \le C\lambda T^{-1}\sum\limits_{1\le 2^k\le T} {2^k e^{-2^k/2}}\le C\lambda T^{-1}.\end{equation}
Consequently, by Young's inequality and the estimate on $K_{\rm Id}$ \eqref{KId} we have
\begin{equation}\label{stube}\|S_\lambda^{tube}\|_{L^2[0,1]\rightarrow L^{2}[0,1]}\le C{\lambda}T^{ - 1}.\end{equation}

\begin{proof}[Proof of Lemma \ref{lemmaK}]
Since the formula of the wave kernel \eqref{wavekernel} consists of 3 terms, we should estimate their contributions separately. Integrating by parts yields
\begin{equation}\begin{aligned}\Big|\int_{-T}^{T}{(1-\beta(\tau))\hat{\chi}(\tau/T)e^{i\lambda\tau}\delta'(|\tau|-r)d\tau}\Big|&\le \sum_{\tau=\pm r} \Big|\frac{d}{d\tau}\Big[(1-\beta(\tau))\hat{\chi}(\tau/T)e^{i\lambda\tau}\Big]\Big|\\
&\le C\lambda,\end{aligned}\end{equation}
since $\beta,\ \hat{\chi}\in \mathcal{S}(\mathbb{R})$.
Similarly,
\begin{equation}\begin{aligned}\Big|\int_{-T}^{T}{(1-\beta(\tau))\hat{\chi}(\tau/T)e^{i\lambda\tau}|\tau|\delta(|\tau|-r)d\tau}\Big|&=\Big|\sum_{\tau=\pm r}(1-\beta(\tau))\hat{\chi}(\tau/T)e^{i\lambda\tau}|\tau|\Big|\\
&\le Cr.\end{aligned}\end{equation}
Noting that $J_1(v)$, $J'_1(v)$ are uniformly bounded for $v\in \mathbb{R}$ and $G(v)$ is an entire function of $v^2$, we see that $G'(v)/v$ is also uniformly bounded for $v\in \mathbb{R}$. Moreover, by \eqref{Gv}, there is some $N\gg1$ such that \[ |G'(v)/v|\le Cv^{-5/2},\quad {\rm for}\ v>N.\] This gives
\begin{equation}\begin{aligned}&\Big|\int_{-T}^{T}{(1-\beta(\tau))\hat{\chi}(\tau/T)e^{i\lambda\tau}\frac{r|\tau|G'(\sqrt{\tau^2-r^2})}{\sqrt{\tau^2-r^2}}{\textbf{1}}_{\{r\le |\tau|\}}d\tau}\Big|\\
&\le Cr\int_{|\tau|\ge r}|\tau|\left|\frac{G'(\sqrt{\tau^2-r^2})}{\sqrt{\tau^2-r^2}}\right|d\tau\\
&\le Cr\Big(\int_0^{N}|\rho+r|d\rho+\int_{N}^{\infty}{|\rho+r||\rho|^{-5/2}d\rho}\Big)\\
&\le Cr(C+Cr),\end{aligned}\end{equation}
where $\rho=|\tau|-r$.
Hence
\[|K_\alpha(t,s)|\le \frac{C\lambda+Cr+Cr^2}{T\sinh r}\le C\lambda T^{-1}e^{-r/2}.\]
\end{proof}
\begin{remark} {\rm As pointed out in Remark \ref{rem1}, one may also obtain Lemma \ref{lemmaK} by using the Hadamard parametrix and  the G{\"u}nther's comparison theorem.}\end{remark}

\section{Proof of the main theorem}
Now we are left to estimate the kernels $K_{\alpha}(t,s)$ with $\alpha\notin {\Gamma _{{{\rm T}_R}(\tilde \gamma )}} $. From now on, we assume that $\alpha\notin {\Gamma _{{{\rm T}_R}(\tilde \gamma )}} $. First of all, we need a slight variation of the oscillatory integral theorem in \cite[Proposition 2]{xz}. Indeed, it is a detailed version of the estimates by Phong and Stein \cite{ps} on the oscillatory integrals with fold singularities.
\begin{proposition}\label{oscint}Let $a\in C_0^\infty(\mathbb{R}^2)$, let $\phi\in C^\infty(\mathbb{R}^2)$ be real valued and $\lambda>0$, set
\[{T_\lambda }f(t) = \int_{ - \infty }^\infty  {{e^{i\lambda \phi (t,s)}}a} (t,s)f(s)ds,\ f\in C_0^\infty(\mathbb{R}).\]
 If $\phi_{st}^{''}\ne 0$ on ${\rm supp}$ a, then \[{\left\| {{T_\lambda }f} \right\|_{{L^2}(\mathbb{R})}} \le {C_{a,\phi }}\lambda^{-\frac12}{\left\| f \right\|_{{L^2}(\mathbb{R})}},\]
where \begin{equation}\label{const1}C_{a,\phi}=C{\rm diam}({\rm supp}\ a)^{\frac12}\Bigg\{{\|a\|_\infty+\frac{ {\sum\limits_{0 \le i,j \le 2} {\|\partial _t^ia\|_\infty\|\partial _t^j\phi _{st}^{''}\|_\infty} }}
{{{{\inf |\phi _{st}^{''}|^2}}}}}\Bigg\}.\end{equation}
Assume  ${\rm supp}$ a is contained in some compact set $F\subseteq {\mathbb R}^2$. Denote the ranges of $t$ and $s$ in $F$ by $F_t\subseteq {\mathbb R}$ and $F_s\subseteq {\mathbb R}$ respectively. If for any $s\in F_s$, there is a unique $t_c=t_c(s)\in F_t$ such that $\phi_{st}^{''}(t_c,s)=0$, and if $\phi_{stt}^{'''}(t_c,s)\ne0$ on $F_s$, then
\[{\left\| {{T_\lambda }f} \right\|_{{L^2}(\mathbb{R})}} \le {C_{a,\phi }'}\lambda^{-\frac14}{\left\| f \right\|_{{L^2}(\mathbb{R})}},\]
where
\begin{equation}\label{const2}C_{a,\phi}'=C{\rm diam}({\rm supp}\ a)^{\frac14}\Bigg\{\|a\|_\infty+\frac{\sum\limits_{0 \le i,j \le 2} {\|\partial _t^ia\|_\infty \|\partial _t^j\phi _{st}^{''}\|_\infty}}
{{\inf {|\phi _{st}^{''}/(t - {t_c(s)})|^2}}}\Bigg\}.
\end{equation}
Dually, if for any $t\in F_t$, there is a unique $s_c=s_c(t)\in F_s$ such that $\phi_{st}^{''}(t,s_c)=0$, and if $\phi_{tss}^{'''}(t,s_c)\ne0$ on $F_t$, then
\[{\left\| {{T_\lambda }f} \right\|_{{L^2}(\mathbb{R})}} \le {C_{a,\phi }''}\lambda^{-\frac14}{\left\| f \right\|_{{L^2}(\mathbb{R})}},\]
where
\begin{equation}\label{const3}C_{a,\phi}''=C{\rm diam}({\rm supp}\ a)^{\frac14}\Bigg\{\|a\|_\infty+\frac{\sum\limits_{0 \le i,j \le 2} {\|\partial _s^ia\|_\infty \|\partial _s^j\phi _{st}^{''}\|_\infty}}
{{\inf {|\phi _{st}^{''}/(s - {s_c(t)})|^2}}}\Bigg\}.
\end{equation}
The $L^\infty$-norm and the infimum are taken on ${\rm supp}$ a. The constant $C>0$ is independent of $\lambda$, $a$, $\phi$ and $F$.
\end{proposition}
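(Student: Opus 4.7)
The plan is to prove the three assertions in turn, with the first being a bookkeeping-heavy version of H\"ormander's classical $L^2$ oscillatory integral estimate, the second a Phong--Stein-style dyadic decomposition around the fold curve $\{t=t_c(s)\}$ that reduces to the first, and the third a formal duality.

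For the non-degenerate estimate, the natural approach is a $T^{*}T$ argument. The kernel of $T_\lambda^{*}T_\lambda$ is
\begin{equation*}
K(s,s')=\int e^{i\lambda[\phi(t,s')-\phi(t,s)]}\,\overline{a(t,s)}\,a(t,s')\,dt,
\end{equation*}
and the mean value theorem gives $|\partial_t[\phi(t,s')-\phi(t,s)]|\ge|s-s'|\inf|\phi''_{st}|$. Integrating by parts twice in $t$ and keeping track of every derivative produces a bound of the form $|K(s,s')|\le C\lambda^{-2}|s-s'|^{-2}M$, where $M$ is a rational combination of $\|\partial_t^i a\|_\infty$, $\|\partial_t^j\phi''_{st}\|_\infty$ ($0\le i,j\le 2$) and $(\inf|\phi''_{st}|)^{-1}$; this accounts for why only $t$-derivatives of $a$ and of $\phi''_{st}$ enter the constant. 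Combining this decaying bound with the trivial bound $|K(s,s')|\le \|a\|_\infty^2\,\mathrm{diam}(\mathrm{supp}\,a)$ via Schur's test, optimizing the crossover scale, and taking a square root, produces the first estimate with a constant of the form \eqref{const1}. The diameter factor $\mathrm{diam}^{1/2}$ comes from the $L^1$--$L^\infty$ bound at the crossover scale together with the fact that we extract $\|T_\lambda\|=\|T_\lambda^{*}T_\lambda\|^{1/2}$.

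For the fold estimate, I would perform a smooth dyadic partition of unity $a=\sum_{k\ge 0}a_k$ with $a_k$ supported where $|t-t_c(s)|\sim 2^{-k}$, stopping at the transition scale $k_0$ defined by $2^{-k_0}\sim\lambda^{-1/2}$. On $\mathrm{supp}\,a_k$, Taylor's formula together with the hypothesis $\phi'''_{stt}(t_c,s)\neq 0$ gives $|\phi''_{st}|\sim 2^{-k}|\phi''_{st}/(t-t_c(s))|\gtrsim 2^{-k}$. For $k\le k_0$ I apply the non-degenerate bound of Part 1 to the operator $T_\lambda^{k}$ with amplitude $a_k$: the diameter in $t$ is $\sim 2^{-k}$, the cutoff contributes $\|\partial_t^i a_k\|_\infty\lesssim 2^{ki}$, and $\inf|\phi''_{st}|^{-2}\sim 2^{2k}$, but after the appropriate anisotropic rescaling $t=t_c(s)+2^{-k}\tau$ (so that $\phi''_{st}/(t-t_c(s))$ becomes the relevant quantity) the bound collapses to $\|T_\lambda^{k}\|\lesssim\lambda^{-1/2}\,2^{k/2}$ times the constant appearing in $C'_{a,\phi}$. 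For $k\ge k_0$, I use the trivial Cauchy--Schwarz bound $\|T_\lambda^{k}\|\lesssim 2^{-k/2}\|a\|_\infty\,\mathrm{diam}^{1/2}$, obtained from the width $\sim 2^{-k}$ of the support in $s$ for each fixed $t$ (using the implicit function theorem on $t=t_c(s)$). The two geometric series balance exactly at $k=k_0$, each summing to $\lambda^{-1/4}$, which yields the required bound. Finally, Part 3 follows immediately by applying Part 2 to the adjoint $T_\lambda^{*}$, whose phase is $-\phi(t,s)$ and whose amplitude roles of $s$ and $t$ are exchanged.

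The main obstacle is the second step, and specifically the need to prevent the constants of Part 1 from degenerating when applied to the localized pieces $a_k$: naively, the factor $2^{2k}$ from $\inf|\phi''_{st}|^{-2}$ and the factor $2^{2k}$ from second derivatives of the cutoff combine to give a ruinous $2^{4k}$ in the numerator. The technical heart of the argument is therefore the rescaling $t\mapsto t_c(s)+2^{-k}\tau$, which converts the denominator $\inf|\phi''_{st}|^{2}$ into the scale-invariant quantity $\inf|\phi''_{st}/(t-t_c(s))|^{2}$ that appears in \eqref{const2}; this is exactly the Phong--Stein second microlocalization in disguise and produces the explicit form of $C'_{a,\phi}$. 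Once this rescaling is carried out the dyadic sum is elementary, and the rest is careful bookkeeping that closely follows \cite[Proposition~2]{xz}.
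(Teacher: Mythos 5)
Your overall architecture (non-stationary phase for Part 1, duality for Part 3, a Phong--Stein-type localization around the fold for Part 2) matches the paper's, but the execution of Part 2 is genuinely different: you propose a dyadic decomposition $a=\sum_k a_k$ of the amplitude in the variable $|t-t_c(s)|$, whereas the paper works directly with the $T^*T$ kernel $K(s,s')$, inserts a \emph{single} cutoff $\eta((t-t_c(s''))/\theta)$ at the $(s,s')$-dependent scale $\theta=(\lambda|s-s'|)^{-1/2}$ (where $s''$ comes from the mean value theorem applied to $\varphi=(\phi(t,s)-\phi(t,s'))/(s-s')$, so $\varphi'_t=\phi''_{st}(t,s'')$), integrates by parts twice away from that window, and then runs a Schur test. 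That continuous-scale version sidesteps the bookkeeping your dyadic sum requires.

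The gap in your proposal is the assertion that ``applying Part 1 to $T_\lambda^k$ after the anisotropic rescaling $t=t_c(s)+2^{-k}\tau$'' gives $\|T_\lambda^k\|\lesssim\lambda^{-1/2}2^{k/2}C'_{a,\phi}$. This rescaling is not a legitimate change of variables for an oscillatory integral operator: $t$ is the output variable and $s$ the integration variable, so setting $t=t_c(s)+2^{-k}\tau$ forces the test function $g(t)$ on the dual side to become $g(t_c(s)+2^{-k}\tau)$, which now depends on both $\tau$ and $s$ and is no longer a one-variable function; equivalently, because the fold curve $t=t_c(s)$ is genuinely curved, no single affine rescaling respects it. Without the rescaling, a literal application of \eqref{const1} to $a_k$ is ruinous, exactly as you anticipated: $\|\partial_t^2 a_k\|_\infty\sim 2^{2k}$ and $\inf|\phi''_{st}|^{-2}\sim 2^{2k}$ combine to a factor $2^{4k}$, so the dyadic series diverges. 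The bound you want \emph{is} true, but the only way to see it is to redo the $T^*T$ kernel estimate for $T_\lambda^k$ from scratch, using the fact that the $t$-support of $a_k(t,s)\overline{a_k(t,s')}$ has measure $O(2^{-k})$ --- an ingredient invisible in the statement of Part 1, whose constant involves only $L^\infty$ norms and the 2D diameter. At that point you are essentially carrying out the paper's $T^*T$/cutoff argument piece by piece, which is more work than the single-cutoff version, and you also need to control the discrepancy between $t_c(s)$, $t_c(s')$, and the mean-value point $t_c(s'')$ that actually governs the phase derivative. None of this is addressed in the proposal.

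Two smaller remarks: the heuristic ``width $\sim 2^{-k}$ of the support in $s$ for each fixed $t$'' used for the tail $k>k_0$ implicitly assumes $t_c'\neq 0$, which is not part of the hypotheses; the correct justification is the Schur bound $\|T_\lambda^k\|^2\le(\sup_t\int|a_k|\,ds)(\sup_s\int|a_k|\,dt)\lesssim\mathrm{diam}\cdot 2^{-k}\|a\|_\infty^2$, where only the $t$-width enters. Also, in the paper's Part 1 (and hence in any version you would apply to $a_k$), the diameter that appears is the full 2D diameter of $\mathrm{supp}\,a$, not the $t$-width, so a priori you do not get any decay in $k$ from that factor.
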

\begin{proof}
Noting that the first part is due to non-stationary phase (see \cite[p.15]{xz}) and the third part simply follows from duality, we only need to prove the second part. As in \cite[p.15]{xz}, by a $TT^*$ argument, it suffices to estimate the kernel of $T_\lambda^* T_\lambda$
\[K(s,s')=\int {{e^{i\lambda (\phi (t,s) - \phi (t,s'))}}a(t,s)\overline {a(t,s')} dt}. \]
Let \[\varphi (t,s,s') = \frac{{\phi (t,s) - \phi (t,s')}}
{{s - s'}},\ {\rm for}\ s\ne s',\ {\rm and}\ \varphi (t,s,s)=\phi'_s(t,s),\]\[\tilde a(t,s,s') = a(t,s)\overline {a(t,s')}. \]
Then the kernel has the form
\begin{equation}\label{kernelK}K(s,s')=\int {{e^{i\lambda (s-s')\varphi(t,s,s')}}\tilde a(t,s,s')} dt. \end{equation}
Using the mean value theorem, we have $\varphi'_t(t,s,s')=\phi''_{st}(t,s'')$, where $s''$ is a number between $s$ and $s'$. By our assumptions, we see that there is a unique point $t_c(s'')\in F_t$ such that $\phi''_{st}(t_c(s''),s'')=0$, and $\phi_{stt}^{'''}(t_c(s''),s'')\ne0$. Let $\theta >0$. Select $\eta \in C_0^\infty(\mathbb{R})$ satisfying $\eta(t)=1, |t|\le 1$, and $\eta (t)=0, |t|\ge 2.$ Then we decompose the oscillatory integral into two parts. First, \[\Big|\int {{e^{i\lambda (s - s')\varphi }}} \tilde a\eta ((t-t_c(s''))/\theta )dt\Big| \le 4\theta \| a \|_\infty ^2.\]
Then integrating by parts yields if $s\neq s'$,
\[\begin{gathered}
  \Big|\int {{e^{i\lambda (s - s')\varphi }}\tilde a(1 - \eta ((t - {t_c(s'')})/\theta ))dt} \Big| \hfill \\
   \le {(\lambda |s - s'|)^{ - 2}}\int\limits_{|t - {t_c(s'')}| > \theta } {\left| {\frac{\partial }
{{\partial t}}\left( {\frac{1}
{{\varphi'_t}}\frac{\partial }
{{\partial t}}\left( {\frac{{\tilde a(1 - \eta ((t - {t_c(s'')})/\theta ))}}
{{\varphi'_t}}} \right)} \right)} \right|dt}  \hfill \\
   \le\frac{C{{{\Big( {\sum\limits_{0 \le i,j \le 2} {||\partial _t^ia|{|_\infty }||\partial _t^i\phi _{st}^{''}|{|_\infty }} } \Big)}^2}}}
{{(\lambda |s - s'|)^{2}}\cdot{\inf {{(|\phi _{st}^{''}|/|t - {t_c(s)}|)}^4}}}\int\limits_{|t - t_c(s'')| > \theta } {(|t - {t_c(s'')}{|^{ - 4}} + {\theta ^{ - 2}}|t - {t_c(s'')}{|^{ - 2}})dt}  \hfill \\
   \le C{\theta ^{ - 3}}{(\lambda |s - s'|)^{ - 2}}\frac{{{{\Big( {\sum\limits_{0 \le i,j \le 2} {||\partial _t^ia|{|_\infty }||\partial _t^i\phi _{st}^{''}|{|_\infty }} } \Big)}^2}}}
{{\inf {{(|\phi _{st}^{''}|/|t - {t_c(s)}|)}^4}}}, \hfill \\
\end{gathered}\]
where $C$ is a constant independent of $\lambda$, $a$, $\phi$ and $F$.
If we set $\theta=(\lambda|s-s'|)^{-\frac12}$, then
\[|K(s,s')|\le C\Bigg\{\|a\|_\infty^2+\frac{{{{\Big( {\sum\limits_{0 \le i,j \le 2} {||\partial _t^i a|{|_\infty }||\partial _t^j\phi _{st}^{''}|{|_\infty }} } \Big)}^2}}}
{{\inf (|\phi _{st}^{''}|/|t-t_c(s)|)^4}}\Bigg\}(\lambda |s-s'|)^{-\frac12},\ \ {\rm if}\ s\neq s'.\]
Hence, \[\int {|K(s,s')|ds}\le C_{a,\phi}^{'2}\lambda^{-\frac12},\]
which completes the proof by Young's inequality.
\end{proof}
From now on, we will use $C$ to denote various positive constants independent of $T$. Using the Hadamard parametrix and stationary phase \cite[p.446]{chensogge}, we can write \[K_\alpha(t,s) = w(\tilde \gamma (t),\alpha (\tilde \gamma (s)))\sum\limits_ \pm  {{a_ \pm }(T,\lambda ;\phi (t,s)){e^{ \pm i\lambda \phi (t,s)}} + R_\alpha(t,s)}, \]
where $|w(x,y)|\le C$, and for each $j=0,1,2,...$, there is a constant $C_j$ independent of $T$, $\lambda \ge 1$ so that \begin{equation}\label{amplitude}\left| {\partial _r^j{a_ \pm }(T,\lambda ;r)} \right| \le {C_j}{T^{ - 1}}{\lambda}{r^{ - 1 - j}}, \ r\ge1.\end{equation}
From the Hadamard parametix with an estimate on the remainder term (see \cite{hangzhou}), we see that with a uniform constant $C$ \[|R_\alpha(t,s)|\le e^{CT}.\]
Noting that ${\rm diam}({\rm supp}\ a_\pm)\le2$ and we have good control on the size of $a_\pm$ and its derivatives by \eqref{amplitude}, it remains to estimate the size of $\phi_{st}^{''}$ and its derivatives. Without loss of generality, we may assume that $(M,g)$ is a compact 3-dimensional Riemannian manifold with constant curvature equal to $-1$. As in \cite{xz}, we will compute the various mixed derivatives of the distance function explicitly on its universal cover $\mathbb{H}^3$. We consider the Poincar\'e half-space model
\[\mathbb{H}^3=\{(x,y,z)\in \mathbb{R}^3:z>0\},\]
with the metric  $ds^2=z^{-2}(dx^2+dy^2+dz^2)$.
Recall that the distance function for the Poincar\'e half-space model is given by
\[{\rm dist}((x_1,y_1,z_1),(x_2,y_2,z_2))={\rm arcosh}\left( {1{\text{ + }}\frac{(x_2- x_1)^2 + (y_2 -y_1)^2+(z_2-z_1)^2}
{{2{z_1}{z_2}}}} \right),\]
where arcosh is the inverse hyperbolic cosine function\[{\rm arcosh}(x)={\rm ln}(x+\sqrt{x^2-1}),\,x\ge1.\]
Moreover, the geodesics are the straight vertical rays normal to the $z=0$-plane and the half-circles normal to the $z=0$-plane with origins on the $z=0$-plane. Without loss of generality, we may assume that $\tilde\gamma$ is the $z$-axis. Let $\tilde\gamma(t)=(0,0,e^t),\ t\in \mathbb{R},$ be the infinite geodesic parameterized by arclength. Our unit geodesic segment is given by $\tilde\gamma(t),\ t\in[0,1].$ Then its image $\alpha(\tilde\gamma(s)),\ s\in [0,1],$ is a unit geodesic segment of $\alpha(\tilde\gamma)$. As before, we denote the distance function ${d_{\tilde g}}(\tilde \gamma (t),\alpha (\tilde \gamma (s)))$ by $\phi(t,s)$. Since we are assuming  $\alpha\notin {\Gamma _{{{\rm T}_R}(\tilde \gamma )}} $, we have \begin{equation}\label{phileT}2\le \phi(t,s)\le T, \quad {\rm if} \quad s,\ t\in[0,1].\end{equation}
If $\tilde\gamma$ and $\alpha(\tilde\gamma)$ are contained in a common plane, it is reduced to the 2-dimensional case. We recall the following lemma from \cite[Lemma 5, 6]{xz}, where $\tilde\gamma(t)=(0,e^t)$ in the Poincar\`e half-plane model.
\begin{lemma}\label{lemma0}Let $\alpha  \notin {\Gamma _{{{\rm T}_R}(\tilde \gamma )}}$. If $\alpha(\tilde\gamma)\cap\tilde\gamma=\emptyset$, we have
\[{\rm inf}\ |\phi_{st}^{''}|\ge e^{-CT}.\]
Assume that $\alpha(\tilde\gamma)$ is a half-circle intersecting $\tilde\gamma$ at the point $(0,e^{t_0}),\ t_0\in \mathbb{R}$. If $t_0\notin[-1,2]$, which means the intersection point $(0,e^{t_0})$ is outside some neighbourhood of the geodesic segment $\{\tilde \gamma(t):\,t\in[0,1]\}$, then we also have
\[{\rm inf}\ |\phi_{st}^{''}|\ge e^{-CT}.\]
If $t_0\in[-1,2]$, then
\[{\rm inf}\ |\phi_{st}^{''}/(t-t_0)|\ge e^{-CT}.\]
Moreover,
\[\|\phi_{st}^{''}\|_\infty+\|\phi_{stt}^{'''}\|_\infty+\|\phi_{sttt}^{''''}\|_\infty\le e^{CT},\]
where  $C>0$ is independent of $T$. The infimum and the norm are taken on the unit square $\{(t,s)\in \mathbb{R}^2:t,s\in[0,1]\}$.
\end{lemma}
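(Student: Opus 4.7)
The plan is to work in the Poincar\'e half-plane model, where hyperbolic distance has a closed form, and compute the mixed partials of $\phi(t,s)=d_{\tilde g}(\tilde\gamma(t),\alpha(\tilde\gamma(s)))$ explicitly in each geometric case. With $\tilde\gamma(t)=(0,e^t)$ the $y$-axis, the complete geodesic $\alpha(\tilde\gamma)$ is either a vertical ray $\{(a,y):y>0\}$ with $a\ne 0$ (the ultraparallel case, disjoint from $\tilde\gamma$) or a Euclidean half-circle centered at $(c,0)$ of radius $\rho$. Arc-length parametrizations give $\alpha(\tilde\gamma(s))=(a,e^{s+b})$ in the first case and $\alpha(\tilde\gamma(s))=(c+\rho\tanh(s+s_0),\rho\,\mathrm{sech}(s+s_0))$ in the second, which I would treat in turn.

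For the vertical-line case, the half-plane distance formula simplifies to
\[\cosh\phi(t,s)=\cosh(t-s-b)+\tfrac{1}{2}a^2 e^{-(t+s+b)}.\]
Since $\alpha\notin\Gamma_{{\rm T}_R(\tilde\gamma)}$ forces $|a|$ above a positive constant depending only on $R$, and finite propagation forces $|b|\le CT$, differentiating yields a manifestly nonvanishing expression for $\phi_{st}''$ whose absolute value is bounded below by $e^{-CT}$, with all higher mixed partials bounded above by $e^{CT}$.

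The half-circle case reduces analogously to
\[\cosh\phi(t,s)=\frac{(c^2+\rho^2+e^{2t})\cosh(s+s_0)+2c\rho\sinh(s+s_0)}{2\rho e^t}.\]
If $|c|\ge\rho$ the circle is disjoint from $\tilde\gamma$ and direct differentiation gives $|\phi_{st}''|\ge e^{-CT}$ on $[0,1]^2$. If $|c|<\rho$ there is a unique crossing at $(0,e^{t_0})$ with $\rho^2-c^2=e^{2t_0}$. Guided by the flat model $\phi^2=t^2-2st\sin\theta+s^2$, where a short computation gives $\phi_{st}''=-ts\cos^2\theta/\phi^3$ vanishing on the line $t=0$ through the crossing, I expect the factorization $\phi_{st}''=(t-t_0)\,q(t,s)$ with $|q|\ge e^{-CT}$ on the unit square. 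I would verify this by expanding the explicit formula and using $\rho^2-c^2=e^{2t_0}$ to exhibit the vanishing factor. The subcase $t_0\in[-1,2]$ then gives $\inf|\phi_{st}''/(t-t_0)|\ge e^{-CT}$ directly, while in the subcase $t_0\notin[-1,2]$ the elementary bound $|t-t_0|\ge 1$ on $t\in[0,1]$ upgrades this to $\inf|\phi_{st}''|\ge e^{-CT}$.

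The upper bounds $\|\phi_{st}''\|_\infty+\|\phi_{stt}'''\|_\infty+\|\phi_{sttt}''''\|_\infty\le e^{CT}$ are then immediate from the explicit formulas, since every hyperbolic/exponential quantity that appears is controlled by $e^{CT}$ once one notes that $|b|,|s_0|,\log(c^2+\rho^2)\le CT$ from $\phi\le T$. The main obstacle is extracting the factorization $\phi_{st}''=(t-t_0)\,q(t,s)$ cleanly: the algebra requires combining the identity $\rho^2-c^2=e^{2t_0}$ with the chain-rule expression $\sinh\phi\cdot\phi_{st}''=\partial_t(\sinh\phi\cdot\phi_s)-\cosh\phi\cdot\phi_t\phi_s$ to witness the exact cancellation. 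Once that factor is visible, the lower bounds in all cases follow by tracking the exponential size of each remaining piece.
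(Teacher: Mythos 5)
Your plan --- passing to the half-plane model, writing $\cosh\phi$ in closed form, and reading off $\phi_{st}''$ configuration by configuration --- is the right approach; indeed this lemma is quoted in the paper from \cite[Lemma 5, 6]{xz} without proof, and that reference carries out exactly such a computation. Both of your closed forms for $\cosh\phi$ check out, and the reduction of $t_0\notin[-1,2]$ to the near-crossing case via $|t-t_0|\ge 1$ is fine.

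There is, however, a genuine gap in the crossing case. Your own flat-model calculation gives $\phi_{st}''=-st\sin^2\theta/\phi^3$ (the $\cos^2\theta$ in your write-up is a typo), which vanishes on the \emph{cross} $\{t=0\}\cup\{s=0\}$, not just on $\{t=0\}$, and the hyperbolic formula inherits exactly this structure. Specializing the paper's identity \eqref{phist} to the coplanar case $\beta=0$ (so $d_1=|a-r|$, $d_2=a+r$, with $a\leftrightarrow c$, $r\leftrightarrow\rho$ in your notation), the bracketed numerator factors as
\[\bigl[(a-r)e^{2s}+(a+r)\bigr]\bigl[e^{2t}+a^2-r^2\bigr],\]
so when $r>a$ the zero set of $\phi_{st}''$ is $\{t=t_0\}\cup\{s=\sigma_0\}$ with $e^{2t_0}=r^2-a^2$ and $e^{2\sigma_0}=(r+a)/(r-a)$. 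Your proposed factorization $\phi_{st}''=(t-t_0)\,q(t,s)$ with $|q|\ge e^{-CT}$ on $[0,1]^2$ is therefore false as written: the $q$ you would obtain vanishes along $s=\sigma_0$. To repair this you must show that $\sigma_0$ is uniformly separated from $[0,1]$, and this is precisely where $\alpha\notin\Gamma_{\mathrm{T}_R(\tilde\gamma)}$ must be invoked at this step: for $s\in[0,1]$ one has $\tilde\gamma(s)\in D$, so $\alpha(\tilde\gamma(s))\in\alpha(D)$ which is disjoint from $\mathrm{T}_R(\tilde\gamma)$, whence $d_{\tilde g}(\alpha(\tilde\gamma(s)),\tilde\gamma)>R$; but $\alpha(\tilde\gamma(\sigma_0))=(0,e^{t_0})\in\tilde\gamma$, so $|s-\sigma_0|=d_{\tilde g}\bigl(\alpha(\tilde\gamma(s)),\alpha(\tilde\gamma(\sigma_0))\bigr)>R$. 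With that separation supplied, the factor $(a-r)e^{2s}+(a+r)$ is bounded away from zero on $s\in[0,1]$ and your argument closes. Two smaller points: in the vertical-ray case the clean identity is $\sinh^3\phi\cdot\phi_{st}''=-a^2e^{-(t+s+b)}$, but the claim that the tube condition ``forces $|a|$ above a positive constant'' is not right --- it bounds $|a|e^{-(s+b)}$ from below, and $|a|$ alone can be small if $e^b$ is small; also, two vertical rays share the ideal point $\infty$ and are asymptotically parallel, not ultraparallel (the genuinely ultraparallel disjoint case is the half-circle with $|c|\ge\rho$, which you do handle).
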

From now on, we assume that $\alpha\notin {\Gamma _{{{\rm T}_R}(\tilde \gamma )}} $, and $\tilde\gamma$ and $\alpha(\tilde\gamma)$ are not contained in a common plane. Without loss of generality, we set $a\ge 0$, $r>0$, and $\beta\in(0,\frac{\pi}{2}]$. Indeed, one can properly choose a coordinate system to achieve this. Let $\gamma_1(t)=(0,0,e^t)$, and $\gamma_2(s)=(a+\frac{1-e^{2s}}{1+e^{2s}}r{\rm cos}\beta, \frac{1-e^{2s}}{1+e^{2s}}r{\rm sin}\beta,\frac{2re^s}{1+e^{2s}})$. It is not difficult to verify that both of them are parameterized by arclength. Assume that
 \[\{\tilde\gamma(t): t\in[0,1]\}=\{\gamma_1(t): t\in[0,1]\},\quad \{\alpha(\tilde\gamma(s)): s\in[0,1]\}=\{\gamma_2(s): s\in I\},\]where $I$ is some unit closed interval of $\mathbb{R}$.  Here $\gamma_2(s),\ s\in{\mathbb R}$, is a half circle centered at $(a,0,0)$ with radius $r$. $\beta$ is the angle between the y-axis and the normal vector of the plane containing the half circle. Moreover, these two geodesics are contained in a common plane when $\beta=0$. See Figure \ref{1}.  \begin{figure}
  \centering
    \includegraphics[height=8cm]{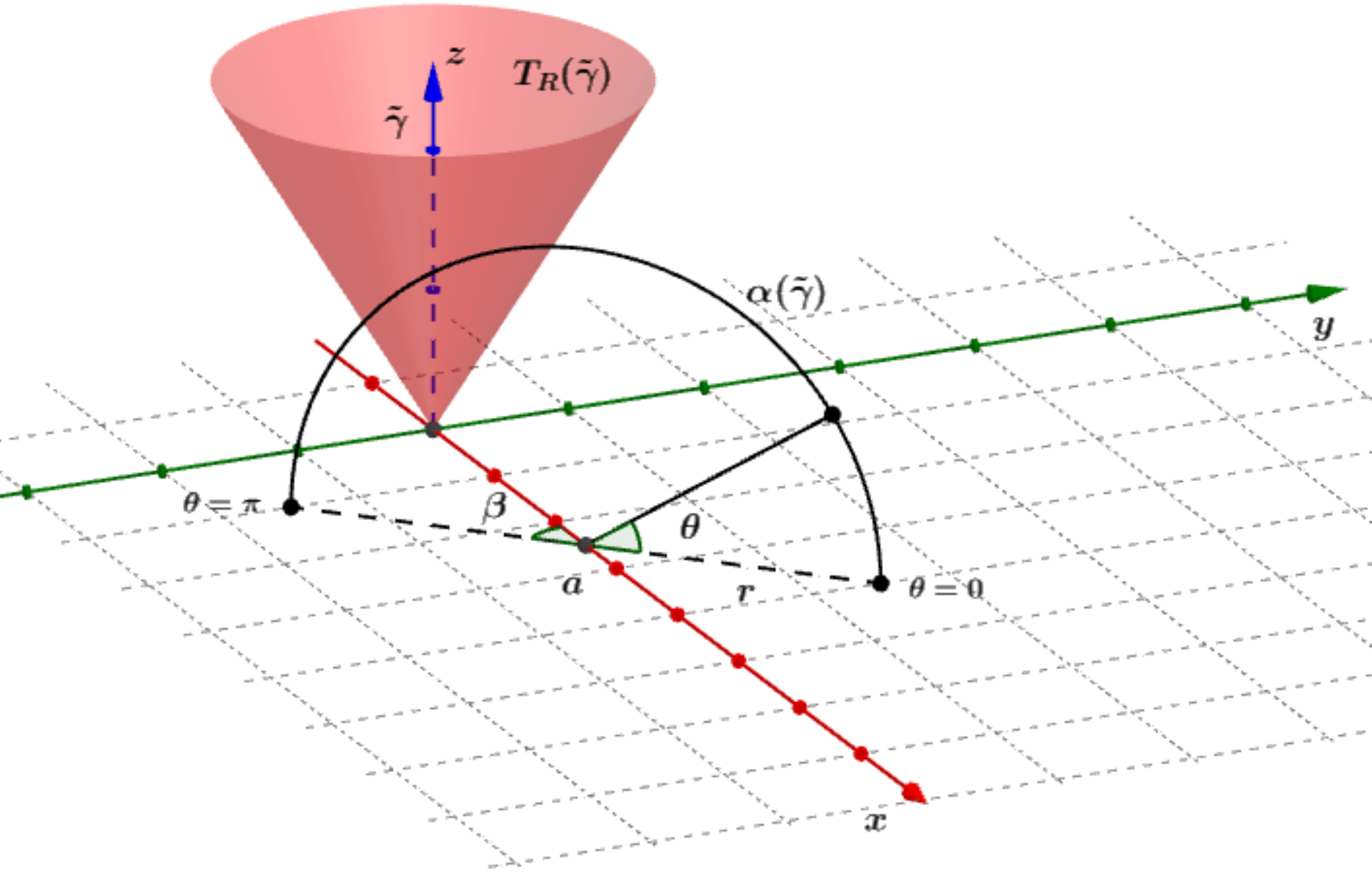}
\caption{Poincar\'e half-space model}
  \label{1}
\end{figure}

Now we are ready to compute $\phi_{st}^{''}$ explicitly and analyze its zero set. For simplification, we denote \[d_1=\sqrt{a^2+r^2-2ar{\rm cos}\beta}\quad {\rm and}\quad  d_2=\sqrt{a^2+r^2+2ar{\rm cos}\beta}.\] Direct computation gives
\[\phi(t,s)={d_{\tilde g}}(\gamma_1 (t), \gamma_2 (s))={\rm arccosh}\Big(\frac{A}{4re^{s+t}}\Big),\ t\in[0,1],\ s\in I,\]
where $A=e^{2s+2t}+e^{2t}+d_1^2e^{2s}+d_2^2$.
Taking derivatives yields
\begin{equation}\label{phist}\phi_{st}^{''}=\frac{16re^{2s+2t}[(a{\rm cos}\beta-r)(e^{2s+2t}+d_2^2)+(a{\rm cos}\beta+r)(e^{2t}+d_1^2e^{2s})]}{(A^2-16r^2e^{2s+2t})^{3/2}}.\end{equation}
The computation is technical. To see \eqref{phist}, we write \[e^{s+t}{\rm cosh}\phi=\frac{A}{4r}.\]Taking derivatives on both sides, we obtain
\begin{equation}\label{eqphist}(\phi'_t+\phi'_s+\phi_{ts}^{''}){\rm sinh}\phi+(1+\phi'_t\phi'_s){\rm cosh \phi}=e^{s+t}/r.\end{equation}
Denote $P=e^{s+t}$, $Q=d_1^2e^{s-t}$, $R=e^{t-s}$, and $S=d_2^2e^{-s-t}$. Since \[4r{\rm cosh}\phi=P+Q+R+S,\]
 taking derivatives yields
\[4r\phi'_t{\rm sinh}\phi=P-Q+R-S,\quad 4r\phi'_s{\rm sinh}\phi=P+Q-R-S.\]
Then we multiply both sides of \eqref{eqphist} by $4r^2({\rm sinh}\phi)^2$ and use the hyperbolic trigonometric identity $({\rm sinh}\phi)^2=({\rm cosh}\phi)^2-1$ to obtain
\[4r^2({\rm sinh}\phi)^3\phi_{st}^{''}=(a\cos\beta-r)(P+S)+(a\cos\beta+r)(Q+R).\]
This gives our desired expression \eqref{phist}.
\begin{figure}
  \centering
    \includegraphics[width=15cm]{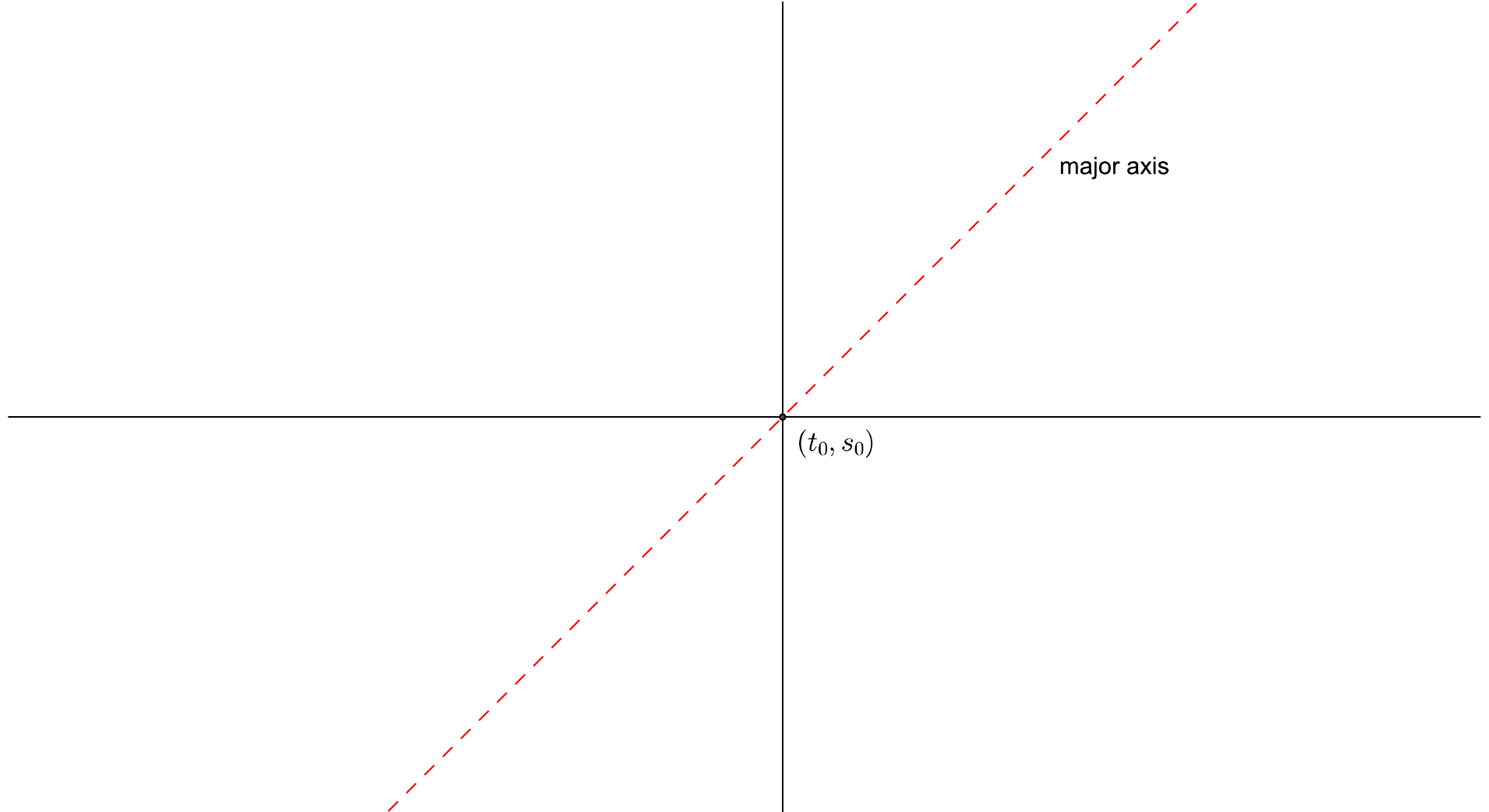}
\caption{Zero set of $\phi_{st}^{''}$, $\beta=\frac{\pi}{2}$}
  \label{2}
  \centering
    \includegraphics[width=15cm]{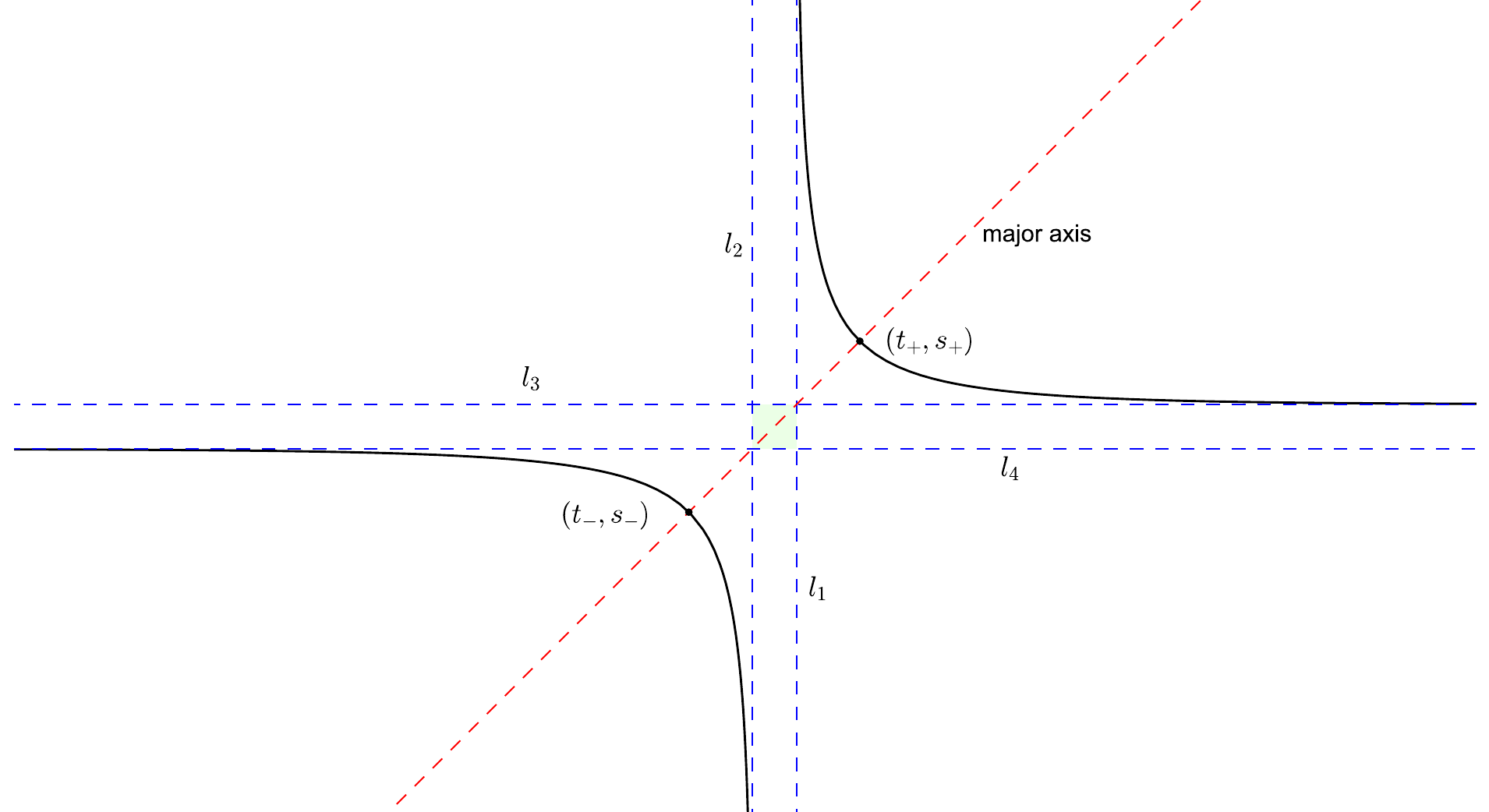}
\caption{Zero set of $\phi_{st}^{''}$, $\beta\in(0,\frac{\pi}{2})$}
  \label{3}
\end{figure}

We denote the zero set of $\phi_{st}^{''}$ by $Z$. Clearly, if $r\le a{\rm cos}\beta$, then $Z=\emptyset$. Assume that $r> a{\rm cos}\beta$. In the interesting special case $\beta=\frac{\pi}{2}$,   \[Z=\{(t,s)\in \mathbb{R}^2:t=t_0\  {\rm or}\ s=s_0\},\]
where $e^{2t_0}=a^2+r^2$ and $e^{2s_0}=1$. See Figure \ref{2}. In this case, we can easily see that $\phi_{stt}^{'''}$ and $\phi_{tss}^{'''}$ vanish at the point $(t_0,s_0)$, as observed in \cite[p.454]{chensogge}. In general, if $0<\beta\le \frac{\pi}{2}$, we have
\begin{equation}\label{zeroset}Z=\{(t,s)\in \mathbb{R}^2:(e^{2t}-X_0)(e^{2s}-Y_0)=B\},\end{equation}
where \begin{equation}\label{defXYB}Y_0=\frac{r+a{\rm cos}\beta}{r-a{\rm cos}\beta},\  X_0=d_1^2Y_0,\ B=\frac{4a^3r{\rm cos}\beta{\rm sin}^2\beta}{(r-a{\rm cos}\beta)^2},\end{equation}and \begin{equation}\label{XYB}X_0Y_0-B=d_2^2.\end{equation}
When $\beta\in (0,\frac\pi2)$, the set $Z$ consists of two disconnected curves. See Figure \ref{3}. It has four different asymptotes:
 \[l_1:\ t=\ln \sqrt{X_0},\quad l_2:\ t=\ln \sqrt{X_0-B/Y_0},\]
 \[l_3:\ s=\ln \sqrt{Y_0},\quad l_4:\ s=\ln \sqrt{Y_0-B/X_0}.\]
 They intersect at four points, which constitute the ``central square'' in Figure \ref{3}. Clearly, the ``central square'' converges to the point $(t_0,s_0)$ as $\beta \to \frac{\pi}{2}$. We set
\begin{equation}\label{tspn}e^{2t_\pm}=X_0\pm\sqrt{\frac{BX_0}{Y_0}} \ \ {\rm and} \ \ e^{2s_\pm}=Y_0\pm\sqrt{\frac{BY_0}{X_0}}.\end{equation}\begin{figure}
  \centering
    \includegraphics[width=15cm]{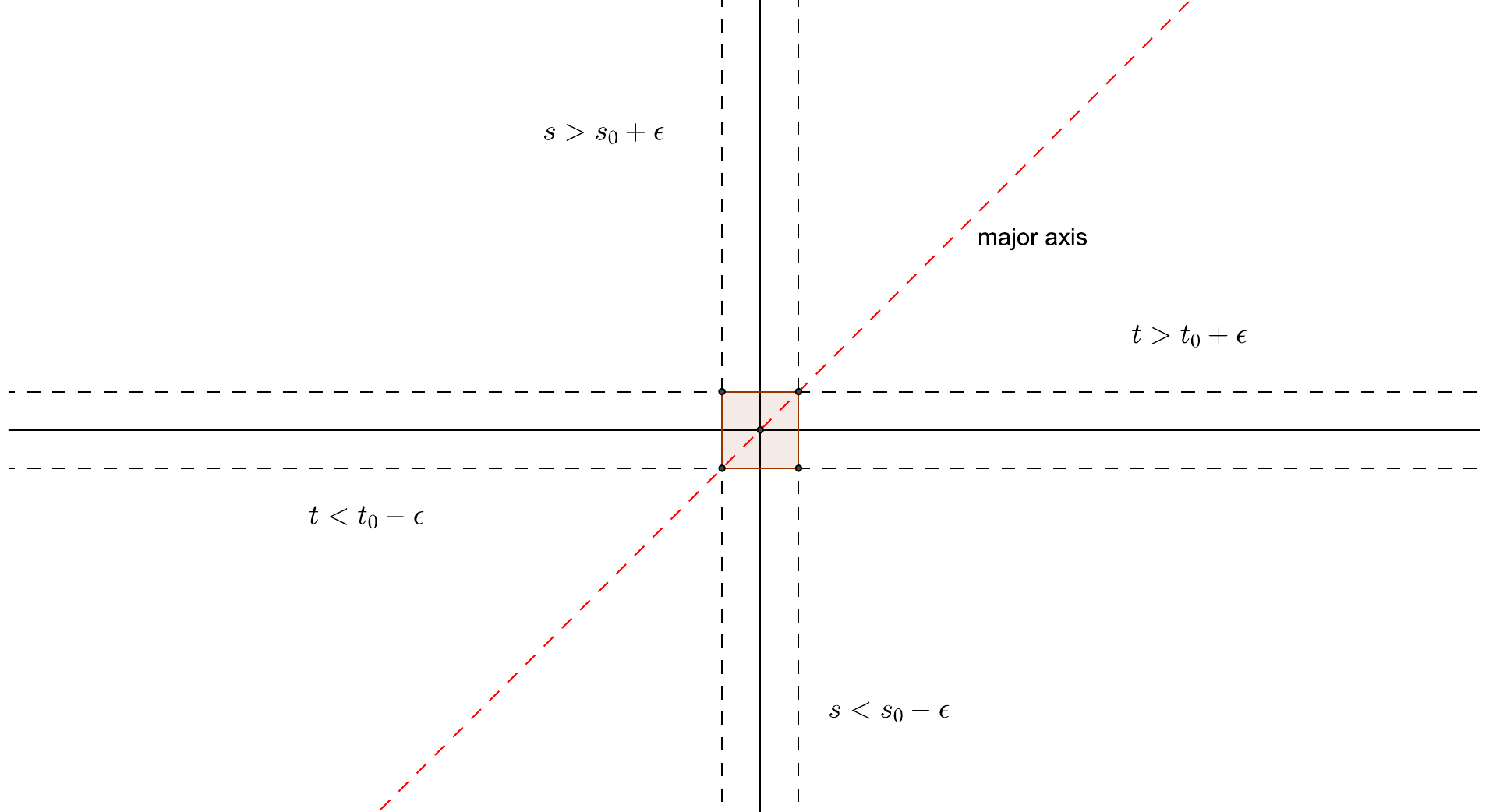}
\caption{$Z_\epsilon$ and its decomposition, $\beta=\frac{\pi}{2}$}
  \label{nbhd1}
  \centering
    \includegraphics[width=15cm]{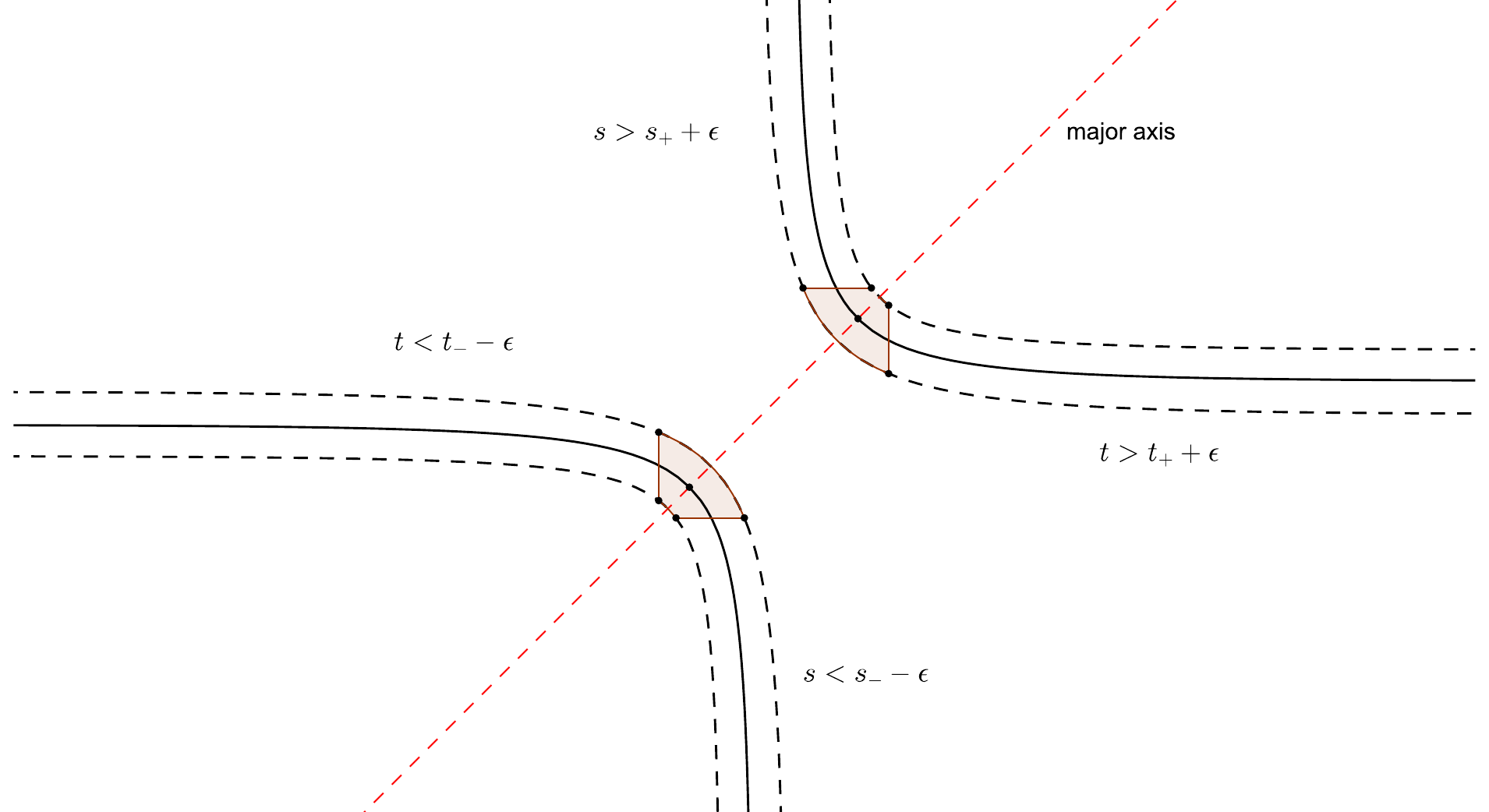}
\caption{$Z_\epsilon$ and its decomposition, $\beta\in(0,\frac{\pi}{2})$}
  \label{nbhd2}
\end{figure}
The points $(t_+,s_+)$ and $(t_-,s_-)$ are a pair of vertices of $Z$ in Figure \ref{3}. They both converge to $(t_0, s_0)$ as $\beta \to \frac{\pi}{2}$. A simple computation shows that the straight line passing through these two vertices, namely the ``major axis'', is parallel to the straight line $t-s=0$. This fact makes the ``restriction trick'' work in the proof of Lemma \ref{lemma1}. Moreover, if $s>s_+$ or $s<s_-$, there is a unique $t_c=t_c(s)$ such that $(t_c,s)\in Z$. If $t>t_+$ or $t<t_-$, there is a unique $s_c=s_c(t)$ such that $(t,s_c)\in Z$. These two facts are related to the oscillatory integral estimates in Proposition \ref{oscint}. Indeed, one can see from \eqref{zeroset} that
\begin{equation}\label{tcsc}e^{2t_c(s)}= X_0+\frac{B}{e^{2s}-Y_0},\quad e^{2s_c(t)}=Y_0+\frac{B}{e^{2t}-X_0}.\end{equation}
Given $0<\epsilon\ll 1$, we denote the  $\epsilon$-neighbourhood  of $Z$ by \[Z_\epsilon=\{(t,s)\in \mathbb{R}^2: {\rm dist}((t,s),Z)\le \epsilon\}.\] In particular, we set $Z_\epsilon=\emptyset$ if $Z=\emptyset$. See Figure \ref{nbhd1} and \ref{nbhd2}. We decompose the domain $[0,1]^2$ of the phase function into 4 parts:

(1) Non-stationary phase part: $[0,1]^2 \setminus Z_\epsilon$;

(2) Left folds part: $[0,1]^2\cap \{(t,s)\in Z_\epsilon: s>s_++\epsilon \ {\rm or} \ s<s_--\epsilon\}$;

(3) Right folds part: $[0,1]^2\cap \{(t,s)\in Z_\epsilon: t>t_++\epsilon \ {\rm or} \ t<t_--\epsilon\}$;

(4) Young's inequality part: $[0,1]^2\cap Z_\epsilon \cap ([t_--\epsilon, t_++\epsilon]\times[s_--\epsilon, s_++\epsilon])$.

\begin{lemma}\label{lemma1}Let $\alpha  \notin {\Gamma _{{{\rm T}_R}(\tilde \gamma )}}$. Assume that $\tilde \gamma$ and $\alpha(\tilde \gamma)$ are not contained in a common plane. Then we have
\[{\rm inf}\ |\phi_{st}^{''}|\ge \epsilon^2e^{-CT},\]
where the infimum is taken on $[0,1]^2 \setminus Z_\epsilon$.
If $Z\ne \emptyset$,  then we have \[{\rm inf}\ |\phi_{st}^{''}|/|t-t_c(s)|\ge \epsilon e^{-CT},\]
where the infimum is taken on $[0,1]^2\cap \{(t,s)\in Z_\epsilon: s>s_++\epsilon \ {\rm or} \ s<s_--\epsilon\}$,
and \[{\rm inf}\ |\phi_{st}^{''}|/|s-s_c(t)|\ge \epsilon e^{-CT},\]
where the infimum is taken on $[0,1]^2\cap \{(t,s)\in Z_\epsilon: t>t_++\epsilon \ {\rm or} \ t<t_--\epsilon\}$.
 The constant  $C>0$ is independent of $\epsilon$ and $T$.
\end{lemma}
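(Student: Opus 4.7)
The plan is to derive a clean factorization of $\phi_{st}''$ from the explicit formula \eqref{phist} and then read off the three bounds case by case. Setting $u = e^{2t}$, $v = e^{2s}$ and using the identities in \eqref{defXYB} and \eqref{XYB}, a direct computation shows that the bracket in the numerator of \eqref{phist} equals $(a\cos\beta - r)[(u - X_0)(v - Y_0) - B]$. Combining with $A^2 - 16 r^2 e^{2s+2t} = 16 r^2 e^{2s+2t} \sinh^2\phi$ (which follows from $\cosh\phi = A/(4r e^{s+t})$) yields
\[\phi_{st}''(t,s) = \frac{(a\cos\beta - r)\, F(t,s)}{4\, r^2\, e^{s+t}\, \sinh^3\phi(t,s)}, \qquad F(t,s) := (e^{2t}-X_0)(e^{2s}-Y_0) - B,\]
so that $F$ vanishes exactly on $Z$. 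I will use the finite propagation bound $\phi \le T$ together with $\alpha \notin \Gamma_{T_R(\tilde\gamma)}$ and compactness of $M$ to show that every geometric parameter ($r$, $a$, $d_1$, $d_2$, $|a\cos\beta - r|$, $X_0$, $Y_0$, $B$, $e^{t_\pm}$, $e^{s_\pm}$) lies in $[e^{-CT}, e^{CT}]$, so that the prefactor of $F$ is between $e^{-CT}$ and $e^{CT}$ in absolute value. The lemma then reduces to proving the analogous bounds on $|F|$ alone.

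For the fold estimates, the key observation is the further factorization
\[F(t,s) = (e^{2t} - e^{2t_c(s)})\,(e^{2s} - Y_0),\]
valid wherever $t_c(s)$ is defined, which follows at once from $(e^{2t_c(s)} - X_0)(e^{2s} - Y_0) = B$. For $s > s_+ + \epsilon$ I would bound $|e^{2t} - e^{2t_c(s)}|/|t - t_c(s)| \ge 2 e^{2 \min(t,\,t_c(s))} \ge e^{-CT}$ by the mean value theorem, and exploit the $\epsilon$-separation to get $|e^{2s} - Y_0| \ge e^{2s} - e^{2s_+} \ge 2\epsilon\, e^{2s_+} \ge \epsilon\, e^{-CT}$ (using $e^{x}-1 \ge x$ and $e^{2s_+} \ge Y_0 \ge e^{-CT}$). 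Multiplying these gives $|F|/|t - t_c(s)| \ge \epsilon\, e^{-CT}$; the case $s < s_- - \epsilon$ is symmetric, and the right-fold estimate is dual, based on $F(t,s) = (e^{2t} - X_0)(e^{2s} - e^{2s_c(t)})$.

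For the non-stationary estimate on $[0,1]^2 \setminus Z_\epsilon$, I would split this set into the part outside the enlarged central rectangle $R_c^{+} := [t_- - \epsilon, t_+ + \epsilon] \times [s_- - \epsilon, s_+ + \epsilon]$, and the part $R_c^{+} \setminus Z_\epsilon$. On the first piece a fold factorization is available with $|t - t_c(s)| \ge \mathrm{dist}((t,s), Z) \ge \epsilon$, so the preceding fold estimate immediately promotes to $|F| \ge \epsilon^{2} e^{-CT}$. On $R_c^{+} \setminus Z_\epsilon$ I would change to the shifted coordinates $p = e^{2t} - X_0$, $q = e^{2s} - Y_0$ so that $F = pq - B$, and distinguish two regimes: if $B \ge \epsilon^{2} e^{-CT}$, then on $Z$ one has $|\nabla F| = \sqrt{p^{2}+q^{2}} \ge \sqrt{2B} \ge \epsilon\, e^{-CT}$, and a linear-approximation argument (controlled by the trivial Hessian bound $\|F''\| \le 2$) gives $|F| \gtrsim \sqrt{B}\,\epsilon \gtrsim \epsilon^{2} e^{-CT}$; if $B < \epsilon^{2} e^{-CT}$, then $Z$ lies within $\sqrt{2B} < \epsilon\, e^{-CT}$ of the cross $\{p=0\} \cup \{q=0\}$, and the distance-to-$Z$ hypothesis forces $\min(|p|,|q|) \gtrsim \epsilon$, so that $|F| = |pq - B| \ge |pq| - B \gtrsim \epsilon^{2}$. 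The hard part will be carefully tracking the $e^{\pm CT}$ factors through this two-regime analysis on $R_c^{+}$ and verifying the uniform lower bounds $X_0, Y_0, B \ge e^{-CT}$ from finite propagation and the assumption $\alpha \notin \Gamma_{T_R(\tilde\gamma)}$, which must all cooperate correctly with the $\epsilon$-scale.
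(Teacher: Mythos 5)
Your factorization $\phi_{st}'' = \dfrac{(a\cos\beta - r)\,F(t,s)}{4r^{2}e^{s+t}\sinh^{3}\phi}$ with $F=(e^{2t}-X_0)(e^{2s}-Y_0)-B$ is correct and matches the paper's \eqref{phist}, and the further fold factorizations $F=(e^{2t}-e^{2t_c})(e^{2s}-Y_0)$ and $F=(e^{2t}-X_0)(e^{2s}-e^{2s_c})$ are exactly the ones used in the paper's Parts 1 and 2. The genuine gap is your blanket claim that all of $r$, $|a\cos\beta-r|$, $X_0$, $Y_0$, $B$, $e^{t_\pm}$, $e^{s_\pm}$ lie in $[e^{-CT},e^{CT}]$, and hence that the prefactor of $F$ is bounded below by $e^{-CT}$. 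This is false: the hypotheses \eqref{poly}--\eqref{claim32} give $r\ge (2\cosh T)^{-1}$ and a dichotomy ``$r\le C\cosh T$ \emph{or} $d_1\ge (C\cosh T)^{-1}$,'' but impose \emph{no} upper bound on $r$ (this is precisely why the paper must split Case (II) into the subcases $r\le C(\cosh T)^7$ and $r\ge C(\cosh T)^7$), and $|r-a\cos\beta|$ can be arbitrarily small, making $Y_0=(r+a\cos\beta)/(r-a\cos\beta)$ and $X_0=d_1^2Y_0$ arbitrarily large. So the ``reduce everything to $|F|$'' step breaks.

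The paper avoids this not by bounding the prefactor uniformly, but by exhibiting an exact cancellation: in the fold estimate it keeps $|e^{2s}-Y_0|\ge(1-e^{-2\epsilon_0})Y_0$ as a factor (rather than replacing $Y_0$ by $\epsilon e^{-CT}$ as you do), so that $(r-a\cos\beta)\cdot Y_0 = r+a\cos\beta$ kills the small $(r-a\cos\beta)$ and the resulting $(r+a\cos\beta)$ is compatible with the denominator $r\,A\,(\cosh T)^2$ in \eqref{phitimes}. You have the right lower bound $e^{2s_+}\ge Y_0$ available but discard it. Similarly, for the non-stationary piece the paper's ``restriction trick'' along lines $s-t=\delta$ produces two sub-cases keyed to the size of $(r-a\cos\beta)e^{2\delta}/(r+a\cos\beta)$; this is what replaces your two-regime ($B$ small vs.\ $B$ large) argument in coordinates $(p,q)$, and is engineered so the factor $Y_0$ again appears and cancels $(r-a\cos\beta)$. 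Your $B$-dichotomy does not obviously produce this cancellation, and the Jacobian factors $2e^{2t}$, $2e^{2s}$ you suppress when passing from dist$((t,s),Z)\ge\epsilon$ to bounds on $|p|,|q|$ are only controlled once $r$ is controlled --- which again requires the case split on $r$ that is missing. To repair the argument you would need to (a) introduce the case split $r\lessgtr C(\cosh T)^{7}$ using claim \eqref{claim32}, and (b) in the fold estimates carry the $Y_0$ (resp.\ $X_0$) factor through and pair it with $(r-a\cos\beta)$ before taking infima.
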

\begin{lemma}\label{lemma2}For every muti-index $\alpha=(\alpha_1,\alpha_2)$,
\[\|D^\alpha\phi\|_\infty\le e^{C_\alpha T},\]
where the norm is taken on the unit square $[0,1]^2$. The constant $C_\alpha$ is independent of $T$.
\end{lemma}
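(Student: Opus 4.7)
The plan is to exploit the explicit identity $\cosh\phi(t,s) = F(t,s)$, where
\[F(t,s) = \frac{1}{4r}\bigl(e^{s+t} + e^{t-s} + d_1^2 e^{s-t} + d_2^2 e^{-s-t}\bigr),\]
together with the uniform lower bound $\sinh\phi \ge \sinh 2 > 0$ coming from \eqref{phileT}, and to induct on the order $|\alpha|$.

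The first key observation is that $F$ is a sum of four nonnegative summands. Since $\phi \le T$ on $[0,1]^2$ gives $F = \cosh\phi \le \cosh T \le e^T$, each of the four summands is itself bounded by $e^T$ on $[0,1]^2$. Because any partial derivative $\partial_t^i \partial_s^j$ applied to one of these summands reproduces it up to a sign, I obtain
\[|D^\beta F(t,s)| \le 4\,F(t,s) \le 4\,e^T\]
for every multi-index $\beta$, uniformly on $[0,1]^2$. Crucially, this estimate requires no individual knowledge of the sizes of $r$, $a$, $d_1$, or $d_2$; the only input is $F \le e^T$ and the positivity of the four terms in $F$.

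The induction then proceeds by applying $D^\alpha$ to both sides of $\cosh\phi = F$. Expanding the left side with Faà di Bruno, the only term containing a derivative of $\phi$ of the full order $|\alpha|$ is $\sinh\phi \cdot D^\alpha\phi$; every other summand is a finite product of $\cosh\phi$ or $\sinh\phi$ with two or more lower-order factors $D^{\beta_i}\phi$, where $|\beta_i| < |\alpha|$ and $\sum_i |\beta_i| = |\alpha|$. Solving for the top-order piece,
\[D^\alpha\phi = \frac{1}{\sinh\phi}\Bigl(D^\alpha F - \sum \text{(lower-order products)}\Bigr),\]
and combining $|\cosh\phi|,|\sinh\phi| \le e^T$ with the inductive hypothesis $|D^{\beta_i}\phi| \le e^{C_{\beta_i} T}$, the bound $|D^\alpha F| \le 4e^T$ from the previous step, and $\sinh\phi \ge \sinh 2$, I arrive at $|D^\alpha\phi| \le e^{C_\alpha T}$ for a constant $C_\alpha$ depending only on $\alpha$ (and independent of $T$, $r$, $a$, $d_1$, $d_2$, $\beta$). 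The base cases $|\alpha| = 0, 1$ are immediate from $\phi \le T$ and $\sinh\phi \cdot \phi_t = F_t$.

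The only real obstacle will be the combinatorial bookkeeping of Faà di Bruno's formula applied to $\cosh$. Since the statement asks only for the qualitative bound $e^{C_\alpha T}$ and not for sharp constants, I will absorb all multinomial coefficients into $C_\alpha$ and close the induction through the crude inequality $\prod_i e^{C_{\beta_i} T} \le e^{(\sum_i C_{\beta_i}) T}$, so the scheme goes through without any further analytic input.
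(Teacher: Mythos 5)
Your proof is correct, and it takes a genuinely different route from the paper's. The paper works directly with the explicit formula $\phi_{st}''=16re^{2s+2t}G/E^{3/2}$, establishes an inductive expression for $D^\alpha(G/E^\gamma)$ as $E^{-\gamma-|\alpha|}$ times a sum of products of derivatives of $G$ and $E$, uses $\phi\ge2$ to get the lower bound $E\ge(A-4re^{s+t})^2\ge Cr^2e^{2s+2t}$, and then closes the estimate by a case split on $r\ge C\cosh T$ versus $r\le C\cosh T$ together with the size bounds \eqref{segment2}--\eqref{rlowbound}; moreover, it only treats the mixed partials of $\phi$ this way and cites \cite{berard}, \cite{top} for the pure $t$- and $s$-derivatives. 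You instead differentiate the implicit relation $\cosh\phi=F$, observing that $F$ is a nonnegative combination of the four exponentials $e^{\pm t\pm s}$, each of which is a joint eigenfunction of $\partial_t$ and $\partial_s$ with eigenvalues $\pm1$ and hence reproduced up to sign by any $D^\beta$; this yields $|D^\beta F|\le F\le\cosh T\le e^T$ with no case analysis whatsoever and no separate control of $r$, $a$, $d_1$, $d_2$. Combined with $\sinh\phi\ge\sinh2>0$ from \eqref{phileT} and Fa\`a di Bruno, the induction on $|\alpha|$ closes for all multi-indices uniformly. Your route is shorter, case-free, and self-contained (pure derivatives come for free), at the cost of a little Fa\`a di Bruno bookkeeping, which you correctly absorb into $C_\alpha$. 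Incidentally, since the four summands are nonnegative, you actually get the slightly sharper $|D^\beta F|\le F\le e^T$ rather than $4e^T$, though this changes nothing in the end.
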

We postpone the proof of the lemmas and finish proving Theorem \ref{thm1}. We always use $C$ to denote various positive constants independent of $\epsilon$ and $T$. Recall that there are at most $O(e^{CT})$ summands with $\alpha\notin {\Gamma _{{{\rm T}_R}(\tilde \gamma )}}$. We claim that the kernel $K_\lambda^{osc}(t,s)$ of the operator $S_\lambda^{osc}$ is bounded by $e^{CT}(\epsilon\lambda+\epsilon^{-2}\lambda^{\frac34}+\epsilon^{-4}\lambda^{\frac12})$. Indeed, one can properly choose some smooth cutoff functions to decompose the domain $[0,1]^2$ and then apply Proposition \ref{oscint}, Lemma \ref{lemma0}-\ref{lemma2} and Young's inequality to the corresponding parts (1)-(4). Recall that Proposition \ref{oscint} consists of ``non-stationary phase'', ``left folds'' and ``right folds''. Since the estimate \eqref{amplitude} on the amplitude holds, it is not difficult to see that $\epsilon\lambda$ comes from Young's inequality, $\epsilon^{-2}\lambda^{\frac34}$ comes from one-side folds(or stationary phase), and  $\epsilon^{-4}\lambda^{\frac12}$ comes from non-stationary phase. Then Young's inequality gives
\begin{equation}\label{sosc}\|S_\lambda^{osc}\|_{L^2[0,1]\rightarrow L^{2}[0,1]}\le e^{CT}(\epsilon\lambda+\epsilon^{-2}\lambda^{\frac34}+\epsilon^{-4}\lambda^{\frac12}).\end{equation}Taking $T=c{\rm log \lambda}$ and $\epsilon=e^{-CT}T^{-1}$, where $c>0$ is a small constant ($c<(12C)^{-1}$), and combining \eqref{sosc} with the estimates on $S_\lambda^{tube}$ \eqref{stube} and $K_0$ \eqref{K_0}, we finish the proof.

\section{Proof of the Lemmas}
Before proving the lemmas, we remark that in the Poincar\'e half-space model
\[{{\rm T}_R}(\tilde\gamma)=\{(x,y,z)\in \mathbb{R}^3: z>0\ {\rm and}\ z\ge \sqrt{x^2+y^2}/\sqrt{({\rm cosh}R)^2-1}\}.\]
See Figure \ref{1}. Indeed, the distance between $(0,0,e^t)$ and $(x,y,z)$, $z>0$, is \[f(t)={\rm arcosh}\Big(1+\frac{x^2+y^2+(z-e^t)^2}{2ze^t}\Big)={\rm arcosh}\Big(\frac{x^2+y^2+z^2+e^{2t}}{2ze^t}\Big).\]
Setting $f'(t)=0$ gives $t={\rm ln}\sqrt{x^2+y^2+z^2}$, which must be the only minimum point. Thus the distance between $(x,y,z)$ and the infinite geodesic $\tilde\gamma$ is \[{\rm dist}((x,y,z),\tilde\gamma)={\rm arcosh}(\sqrt{1+(x/z)^2+(y/z)^2}).\]
Since ${\rm dist}((x,y,z),\tilde\gamma)\le R$ in ${{\rm T}_R}(\tilde\gamma)$, it follows that $z\ge \sqrt{x^2+y^2}/\sqrt{({\rm cosh}R)^2-1}$.
\begin{proof}[Proof of Lemma \ref{lemma1}]
First of all, we need to derive some useful results from the condition that $\phi(t,s)\le T$. Namely,
\begin{equation}\label{poly}(e^{2t}+d_1^2)e^{2s}-4r({\rm cosh}T)e^te^s+e^{2t}+d_2^2\le 0,\ t\in[0,1],\ s\in I.\end{equation}
Solving the quadratic inequality \eqref{poly} about $e^s$, we have \begin{equation}\label{segment2}\frac{r}{4{\rm cosh}T}\le e^s\le 4r{\rm cosh}T.\end{equation} The discriminant of \eqref{poly} has to be nonnegative:
\[16r^2({\rm cosh}T)^2e^{2t}-4(e^{2t}+d_1^2)(e^{2t}+d_2^2)\ge0,\]
from which we see that
\begin{equation}\label{a/rbound}\frac{a}{r}\le 2e{\rm cosh}T,\end{equation}
\begin{equation}\label{d1bound}d_1\le 2e{\rm cosh}T,\end{equation}
\begin{equation}\label{rlowbound}r\ge \frac1{2{\rm cosh}T},\end{equation}
which are similar to the observations in \cite[p.21]{xz}.

Moreover, to get the lower bounds of the derivatives, we need the condition that $\alpha  \notin {\Gamma _{{{\rm T}_R}(\tilde \gamma )}}$. We claim that there exists some constant $C$ independent of $T$ such that
\begin{equation}\label{claim32} \alpha  \notin {\Gamma _{{{\rm T}_R}(\tilde \gamma )}} \Rightarrow r\le C{\rm cosh}T\ {\rm or}\ d_1\ge \frac1{C{\rm cosh}T}.\end{equation}
Indeed, we are going to prove the contrapositive:
\begin{equation}\label{contra}  r\ge C{\rm cosh}T\ {\rm and}\ d_1\le \frac1{C{\rm cosh}T }\Rightarrow \alpha  \in {\Gamma _{{{\rm T}_R}(\tilde \gamma )}}.\end{equation}
We obtain this by showing that under the above assumptions on $r$ and $d_1$,  the segment $\gamma_2(s), s\in [-{\rm ln}(4r^{-1}{\rm cosh}T),{\rm ln}(4r{\rm cosh}T)]$ is completely included in ${{\rm T}_R}(\tilde \gamma )$, which implies $\alpha  \in {\Gamma _{{{\rm T}_R}(\tilde \gamma )}}$ by \eqref{segment2}. The argument is generalized from \cite[p.23]{xz}.
Solving the polynomial system
\[\begin{cases} z= \sqrt{x^2+y^2}/\sqrt{({\rm cosh}R)^2-1} \\(x,y,z)=(a+\frac{1-e^{2s}}{1+e^{2s}}r{\rm cos}\beta, \frac{1-e^{2s}}{1+e^{2s}}r{\rm sin}\beta,\frac{2re^s}{1+e^{2s}})\end{cases}\]
we can see that
\begin{equation}\label{intube}\begin{aligned}&\{\gamma_2(s):s\in \mathbb{R}\} \cap {{\rm T}_R}(\tilde \gamma )\\
&=\{\gamma_2(s):d_1^2e^{4s}+2(a^2+r^2-2({\rm cosh}R)^2r^2)e^{2s}+d_2^2\le 0\}.\end{aligned}\end{equation}
Note that \[\begin{cases} r\ge C {\rm cosh}T\\ d_1\le (C {\rm cosh}T)^{-1}\end{cases}\Rightarrow a/r\le 1+(C{\rm cosh}T)^{-2}\le \sqrt{({\rm cosh}R)^2-1}.\]
This implies
\[\frac{a}{r}\le \sqrt{\frac{({\rm cosh}R)^2-1}{({\rm cosh}R)^2-{\rm cos}^2\beta}}{\rm cosh}R,\]
which is equivalent to \[(a^2+r^2-2({\rm cosh}R)^2r^2)^2-d_1^2d_2^2\ge0.\]
This means that the discriminant of the  quadratic polynomial in terms of $e^{2s}$ in \eqref{intube} is nonnegative.
Thus when $d_1>0$, the RHS of (\ref{intube}) becomes
\begin{equation}\label{condit1}\{\gamma_2(s):u_- \le e^{2s} \le u_+\},\end{equation}
where \begin{equation}\label{condit2}u_\pm=\frac{2({\rm cosh}R)^2r^2-r^2-a^2\pm \sqrt{(a^2+r^2-2({\rm cosh}R)^2r^2)^2-d_1^2d_2^2}}{d_1^2}.\end{equation}
It is easy to see that
\begin{equation}\label{u-}u_-\le \frac{d_2^2}{2({\rm cosh}R)^2r^2-r^2-a^2}\le \frac{d_2^2}{({\rm cosh}R)^2r^2}\le \frac{({\rm cosh}R)^2+2{\rm cosh}R}{({\rm cosh}R)^2},\end{equation}
\begin{equation}\label{u+}u_+\ge \frac{(2({\rm cosh}R)^2-1)r^2-a^2}{d_1^2}\ge \frac{({\rm cosh}R)^2r^2}{d_1^2}.\end{equation}
So if we choose $C=4\sqrt{{\rm cosh}R+2}/\sqrt{{\rm cosh}R}$, we see that
 \begin{equation}\label{condit3}d_1>0 \ {\rm and}\  \begin{cases} r\ge C {\rm cosh}T\\ d_1\le (C {\rm cosh}T)^{-1}\end{cases} \Rightarrow \begin{cases}u_- \le r^2(4{\rm cosh}T)^{-2}\\ u_+\ge (4r{\rm cosh}T)^2\end{cases} \Rightarrow \alpha  \in {\Gamma _{{{\rm T}_R}(\tilde \gamma )}}.\end{equation}
In the easier case $d_1=0$, we have $u_+=+\infty$. Consequently, we obtain \eqref{contra}, which is equivalent to our claim \eqref{claim32}.

Moreover, we notice that by $\phi\le T$,\begin{equation}\label{phitimes}|\phi_{st}^{''}|\ge |\phi_{st}^{''}|\Big( \frac{A}{4re^{s+t}{\rm cosh}T}\Big)^2\ge \frac{|(a{\rm cos}\beta-r)(e^{2s+2t}+d_2^2)+(a{\rm cos}\beta+r)(e^{2t}+d_1^2e^{2s})|}{({\rm cosh}T)^2rA}.\end{equation}
Now we need to consider two cases: (I) $r\le a{\rm cos}\beta$; (II) $r>a{\rm cos}\beta$.

\noindent\textbf{Case (I)}: $\phi_{st}^{''}$ has no zeros and it is not difficult to obtain the lower bound of $|\phi_{st}^{''}|$. Indeed, if $d_1\ge 1$, by \eqref{phitimes} and \eqref{segment2}-\eqref{a/rbound}, we get
\[|\phi_{st}^{''}|\ge \frac{C(a{\rm cos}\beta+r)d_1^2r^2({\rm cosh}T)^{-2}}{({\rm cosh}T)^2r(d_1^2r^2({\rm cosh}T)^2)}\ge Ce^{-6T}.\]
If $d_1\le 1$, the claim \eqref{claim32} is needed. We assume that $r\le C{\rm cosh}T$. Then by \eqref{phitimes} and \eqref{segment2}-\eqref{rlowbound}, we obtain
\[|\phi_{st}^{''}|\ge \frac{C(a{\rm cos}\beta+r)e^{2t}}{({\rm cosh}T)^2r(r^2({\rm cosh}T)^2)}\ge Ce^{-6T}.\]
Otherwise, we assume that $d_1\ge (C{\rm cosh}T)^{-1}$. Then similarly we have
\[|\phi_{st}^{''}|\ge \frac{C(a{\rm cos}\beta+r)d_1^2r^2({\rm cosh}T)^{-2}}{({\rm cosh}T)^2r(r^2({\rm cosh}T)^2)}\ge Ce^{-8T}.\]
\noindent\textbf{Case (II)}: Since $\phi_{st}^{''}$ has zeros, we prove the lower bound of $|\phi_{st}^{''}|$ on $([0,1]\times I) \setminus Z_\epsilon$ first. The claim \eqref{claim32} is essential here. However, for technical reasons we only need a slightly weaker but useful version of the claim:
\begin{equation}\label{claim} \alpha  \notin {\Gamma _{{{\rm T}_R}(\tilde \gamma )}} \Rightarrow r\le C({\rm cosh}T)^7\ {\rm or}\ \begin{cases} d_1\ge (C{\rm cosh}T)^{-1}\\ r\ge C({\rm cosh}T)^7\end{cases}.\end{equation}
(i) Assume that $r\le C({\rm cosh}T)^7$.

 In this case, we use a ``restriction trick'' to reduce it to a one-variable problem. Let $\delta\in{\mathbb R}$. We restrict $\phi_{st}^{''}(t,s)$ on the straight line $s-t=\delta$ and obtain a uniform lower bound independent of $\delta$. Indeed,
\[\begin{aligned}&|(a{\rm cos}\beta-r)(e^{2s+2t}+d_2^2)+(a{\rm cos}\beta+r)(e^{2t}+d_1^2e^{2s})|\\
&=(r-a{\rm cos}\beta)|e^{2s+2t}-Y_0e^{2t}-X_0e^{2s}+d_2^2|\\
&=(r-a{\rm cos}\beta)|e^{4t}-(X_0+Y_0e^{-2\delta})e^{2t}+d_2^2e^{-2\delta}|e^{2\delta}\\
&=(r-a{\rm cos}\beta)|(e^{2t}-e^{2\tau_-})(e^{2t}-e^{2\tau_+})|e^{2\delta},
\end{aligned}\]
where \[2e^{2\tau_\pm}=X_0+Y_0e^{-2\delta}\pm\sqrt{(X_0-Y_0e^{-2\delta})^2+4Be^{-2\delta}}.\]
If $r-a{\rm cos}\beta\le \frac{r+a{\rm cos}\beta}{100}e^{-2\delta}$, then
\[2e^{2\tau_+}\ge Y_0e^{-2\delta}\ge 100.\]
But $t\in[0,1]$ implies that \[|e^{2t}-e^{2\tau_+}|\ge \frac12e^{2\tau_+}\ge \frac14Y_0e^{-2\delta}.\]
Let $Z_{\epsilon,\delta}= \{t\in \mathbb{R}: (t,t+\delta)\in Z_\epsilon\}$. Since the straight line $s-t=\delta$ is parallel to the ``major axis'' of $Z$,  we have
\begin{equation}\label{eps1}{\rm dist}(\tau_\pm, [0,1]\setminus Z_{\epsilon,\delta})\ge \epsilon/\sqrt2.\end{equation}
See Figure \ref{dist1}.
\begin{figure}
  \centering
    \includegraphics[width=15cm]{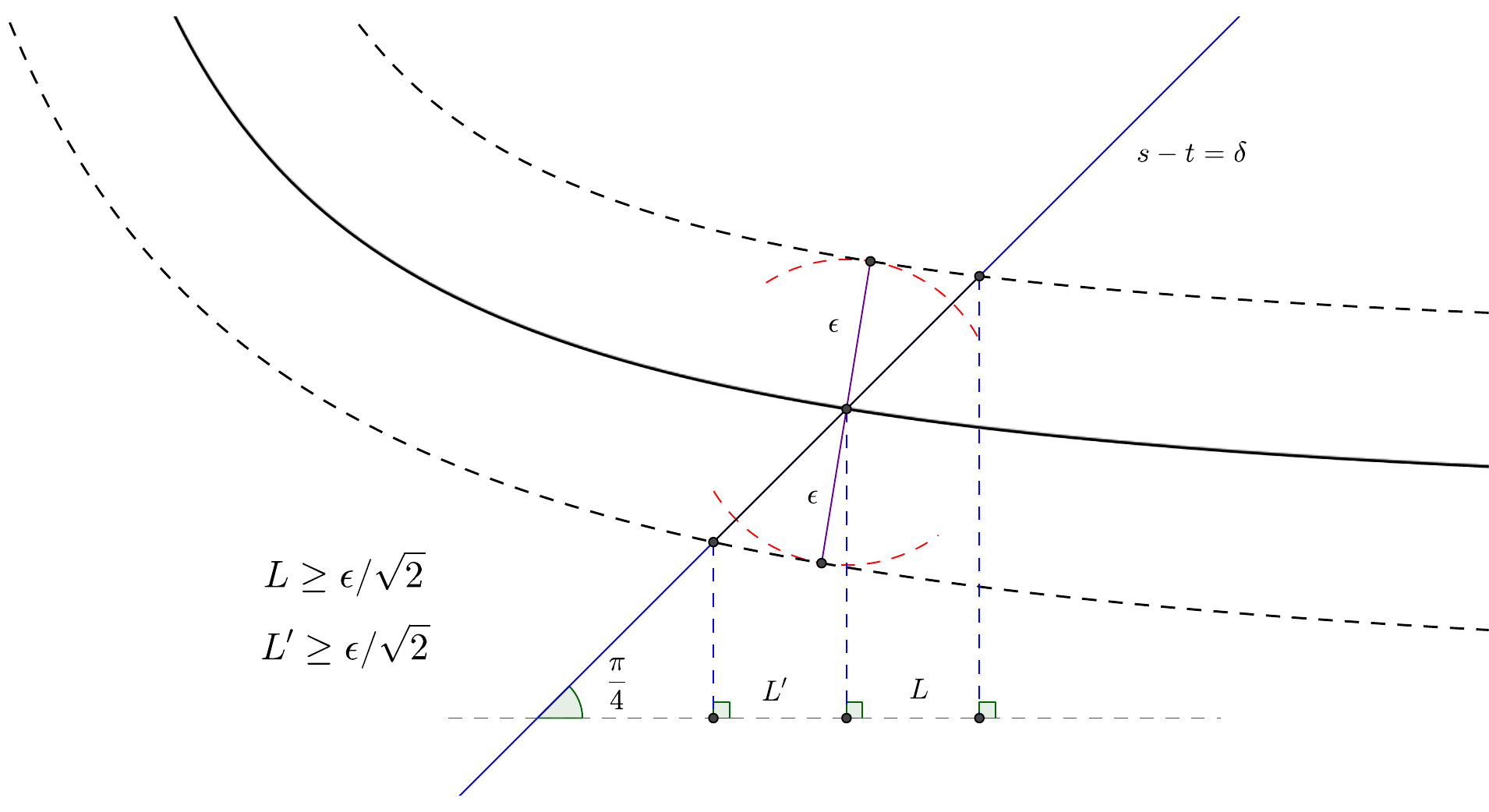}
\caption{Restriction on $s-t=\delta$}
  \label{dist1}
\end{figure}
This implies \[|e^{2t}-e^{2\tau_-}|\ge 1-e^{-\epsilon\sqrt2}\ge \epsilon/10,\  {\rm for}\ t\in [0,1]\setminus Z_{\epsilon,\delta}.\] Thus
\[(r-a{\rm cos}\beta)|(e^{2t}-e^{2\tau_-})(e^{2t}-e^{2\tau_+})|e^{2\delta}\ge \frac{\epsilon}{40}(r+a{\rm cos}\beta).\]
If $r-a{\rm cos}\beta\ge \frac{r+a{\rm cos}\beta}{100}e^{-2\delta}$, then we use \eqref{eps1} again to see that
 \[|e^{2t}-e^{2\tau_\pm}|\ge 1-e^{-\epsilon\sqrt2}\ge \epsilon/10,\  {\rm for}\ t\in [0,1]\setminus Z_{\epsilon,\delta},\]
 which gives \[(r-a{\rm cos}\beta)|(e^{2t}-e^{2\tau_-})(e^{2t}-e^{2\tau_+})|e^{2\delta}\ge \frac{\epsilon^2}{10000}(r+a{\rm cos}\beta).\] So we can use \eqref{phitimes}, \eqref{segment2}-\eqref{rlowbound} and our assumption $r\le C({\rm cosh}T)^7$ to obtain the lower bound of $|\phi_{st}^{''}|$, namely
\begin{equation}\label{lowbd1} |\phi_{st}^{''}|\ge \frac{C\epsilon^2(r+a{\rm cos}\beta)}{({\rm cosh}T)^2r(r^2({\rm cosh}T)^4)}\ge C\epsilon^2e^{-20T}.\end{equation}
(ii) Assume that $d_1\ge (C{\rm cosh}T)^{-1}\ {\rm and}\ r\ge C({\rm cosh}T)^7$.

If $|r-a{\rm cos}\beta|\le 1$, we can use \eqref{segment2}-\eqref{rlowbound} and our assumption to get
\[|(a{\rm cos}\beta-r)(e^{2s+2t}+d_2^2)+(a{\rm cos}\beta+r)(e^{2t}+d_1^2e^{2s})|\ge Cr^3({\rm cosh}T)^{-4},\]
since $(r+a{\rm cos}\beta)(d_1^2e^{2s}+e^{2t})\ge Cr^3({\rm cosh}T)^{-4}$ and $(r-a{\rm cos}\beta)(e^{2s+2t}+d_2^2)\le Cr^2({\rm cosh}T)^2$.

If $|r-a{\rm cos}\beta|\ge 1$, then $d_1\ge |r-a{\cos}\beta|\ge1$. Thus, $(r+a{\rm cos}\beta)(d_1^2e^{2s}+e^{2t})\ge Cr^3({\rm cosh}T)^{-2}$ and $(r-a{\rm cos}\beta)(e^{2s+2t}+d_2^2)\le Cr^2({\rm cosh}T)^3$, which imply
\[|(a{\rm cos}\beta-r)(e^{2s+2t}+d_2^2)+(a{\rm cos}\beta+r)(e^{2t}+d_1^2e^{2s})|\ge Cr^3({\rm cosh}T)^{-2}.\]
Therefore, we use \eqref{phitimes} and \eqref{segment2}-\eqref{rlowbound} to get
\begin{equation}\label{lowbd} |\phi_{st}^{''}|\ge \frac{Cr^3({\rm cosh}T)^{-4}}{({\rm cosh}T)^2r(r^2({\rm cosh}T)^4)}\ge Ce^{-10T},\end{equation}
which is better than the bound $\epsilon^2e^{-CT}$. Since the lower bounds in \eqref{lowbd1} and \eqref{lowbd} are independent of $\delta$, we finish the proof of the lower bound of $|\phi_{st}^{''}|$ on $([0,1]\times I) \setminus Z_\epsilon$.

Now we are ready to give the proof of the lower bounds of $|\phi_{st}^{''}/(t-t_c)|$ and $|\phi_{st}^{''}/(s-s_c)|$. Denote \[\epsilon_0=\frac12{\rm ln}\Big(1+\sqrt{\frac{B}{X_0Y_0}}\Big)+\epsilon.\]
\noindent\textbf{Part 1}: Assume that \begin{equation}\label{assume1}([0,1]\times I)\cap \{(t,s)\in Z_\epsilon: s>s_++\epsilon \ {\rm or} \ s<s_--\epsilon\}\ne \emptyset. \end{equation}We need to obtain the lower bound of $|\phi_{st}^{''}/(t-t_c)|$ on this set.
A simple computation using \eqref{defXYB}-\eqref{tspn} shows that  \begin{equation}\begin{aligned}s>s_++\epsilon\  &\Leftrightarrow\  e^{2s}>Y_0e^{2\epsilon_0},\\
s<s_--\epsilon\ &\Leftrightarrow\ e^{2s}<(Y_0-B/X_0)e^{-2\epsilon_0}.\end{aligned}\end{equation}
Hence \[|e^{2s}-Y_0|\ge (1-e^{-2\epsilon_0})Y_0.\]
Since the ``major axis'' of $Z$ is parallel to the straight line $s-t=0$, by our assumption \eqref{assume1} we have $t_c\in[-\epsilon\sqrt2,1+\epsilon\sqrt2]$. See Figure \ref{dist2}.
\begin{figure}
  \centering
    \includegraphics[width=15cm]{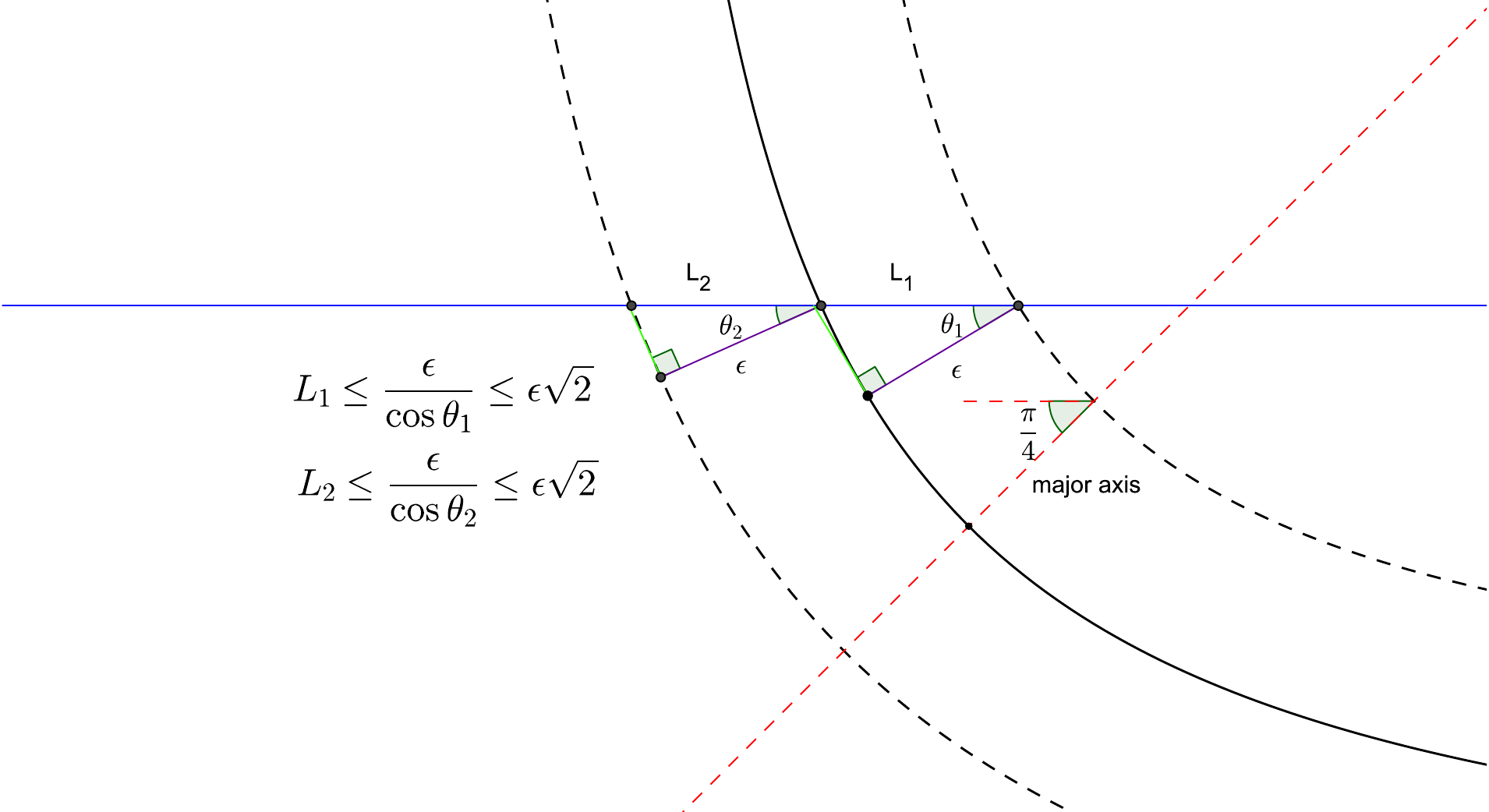}
\caption{${\rm dist}(t_c,[0,1])\le \epsilon\sqrt2$}
  \label{dist2}
  \centering
    \includegraphics[width=15cm]{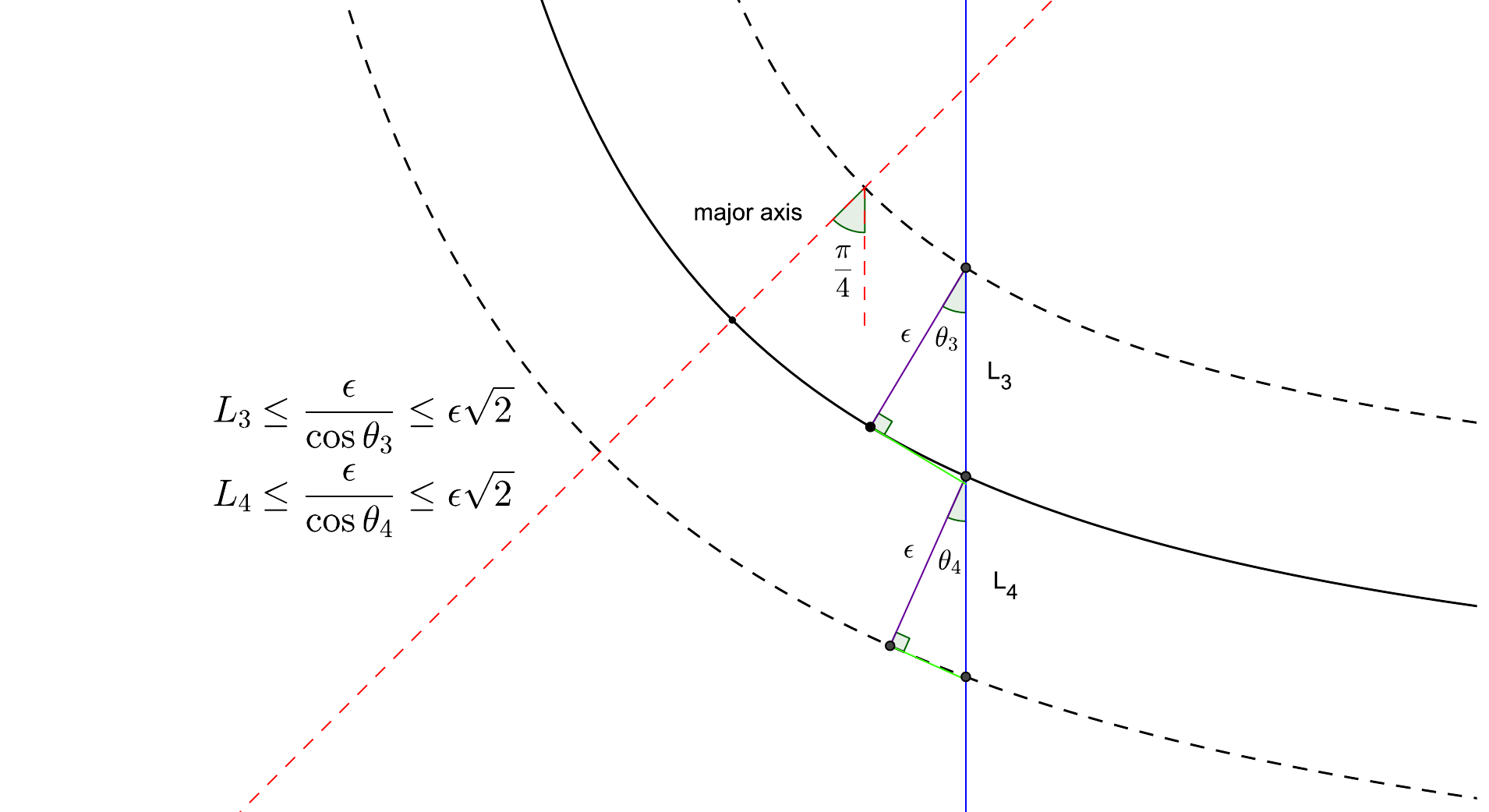}
\caption{${\rm dist}(s_c,I)\le \epsilon\sqrt2$}
  \label{dist3}
\end{figure}Thus,
\[\begin{aligned}&|(a{\rm cos}\beta-r)(e^{2s+2t}+d_2^2)+(a{\rm cos}\beta+r)(e^{2t}+d_1^2e^{2s})|/|t-t_c|\\
&=(r-a{\rm cos}\beta)\Big|\frac{(e^{2t}-X_0)(e^{2s}-Y_0)-B}{t-t_c}\Big|\\
&=(r-a{\rm cos}\beta)\Big|\frac{(e^{2t}-e^{2t_c})(e^{2s}-Y_0)}{t-t_c}\Big|\\
&=(r-a{\rm cos}\beta)\cdot2e^{2t'}\cdot|e^{2s}-Y_0|\\
&\ge (r-a{\rm cos}\beta)\cdot2e^{-2\epsilon\sqrt2}\cdot(1-e^{-2\epsilon_0})Y_0\\
&\ge \frac{\epsilon}{100}(r+a{\rm cos}\beta),
\end{aligned}\]
where we use the mean value theorem and $\epsilon_0\ge \epsilon$.

First, we assume that $r\le C({\rm cosh}T)^7$. Then using \eqref{phitimes} and \eqref{a/rbound}-\eqref{rlowbound}, we obtain
\[\Big|\frac{\phi_{st}^{''}}{t-t_c}\Big|\ge \frac{C\epsilon(r+a{\rm cos}\beta)}{({\rm cosh}T)^2r(r^2({\rm cosh}T)^4)}\ge C\epsilon e^{-20T}. \]
Under the other assumption that ``$d_1\ge (C{\rm cosh}T)^{-1}\ {\rm and}\ r\ge C({\rm cosh}T)^7$'', since $|t-t_c|\le 1+\epsilon\sqrt2$ and the lower bound \eqref{lowbd} of $|\phi_{st}^{''}|$ is still applicable here, we get
\[\Big|\frac{\phi_{st}^{''}}{t-t_c}\Big|\ge Ce^{-10T}.\]
\noindent\textbf{Part 2}:  Assume that \begin{equation}\label{assume2}([0,1]\times I)\cap \{(t,s)\in Z_\epsilon: t>t_++\epsilon \ {\rm or} \ t<t_--\epsilon\}\ne \emptyset.\end{equation} We need to get the lower bound of $|\phi_{st}^{''}/(s-s_c)|$ on this set.
 It is also not difficult to see from \eqref{defXYB}-\eqref{tspn} that \begin{equation}\begin{aligned}t>t_++\epsilon\ &\Leftrightarrow\ e^{2t}> X_0e^{2\epsilon_0},\\
 t<t_--\epsilon\ &\Leftrightarrow\ e^{2t}<(X_0-B/Y_0)e^{-2\epsilon_0}.\end{aligned}\end{equation}
  Hence
 \[|e^{2t}-X_0|\ge (1-e^{-2\epsilon_0}){\rm max}\{X_0,1\}.\]
If $t>t_++\epsilon$, clearly we have $e^{2s_c}\ge Y_0$. See Figure \ref{3}. If $B=0$, we have $e^{2s_c}= Y_0$.  If $t<t_--\epsilon$ and $B>0$, then from \eqref{tcsc} we get
\[\begin{aligned}&e^{2s_c}= Y_0-\frac{B}{X_0-e^{2t}}\ge Y_0-\frac{B}{e^{2t+2\epsilon_0}+B/Y_0-e^{2t}}\\
&= Y_0-\frac{B}{e^{2t}(e^{2\epsilon}-1)+e^{2t+2\epsilon}\sqrt{B/(X_0Y_0)}+B/Y_0}\ge Y_0-\frac{B}{\sqrt{B/(X_0Y_0)}+B/Y_0}\\
&= Y_0-\frac{X_0Y_0}{\sqrt{X_0Y_0/B}+X_0}\ge Y_0-\frac{X_0Y_0}{1+X_0}\\
&= \frac{Y_0}{X_0+1},
\end{aligned}\]
where we use $X_0Y_0/B>1$ from \eqref{XYB}. Since the ``major axis'' of $Z$ is parallel to the straight line $s-t=0$, by the assumption \eqref{assume2} we get ${\rm dist}(s_c,I)\le \epsilon\sqrt2$. See Figure \ref{dist3}. Therefore,
\[\begin{aligned}&|(a{\rm cos}\beta-r)(e^{2s+2t}+d_2^2)+(a{\rm cos}\beta+r)(e^{2t}+d_1^2e^{2s})|/|s-s_c|\\
&=(r-a{\rm cos}\beta)\Big|\frac{(e^{2t}-X_0)(e^{2s}-Y_0)-B}{s-s_c}\Big|\\
&=(r-a{\rm cos}\beta)\Big|\frac{(e^{2t}-X_0)(e^{2s}-e^{2s_c})}{s-s_c}\Big|\\
&=(r-a{\rm cos}\beta)|e^{2t}-X_0|\cdot2e^{2s'}\\
&\ge(r-a{\rm cos}\beta)|e^{2t}-X_0|\cdot2e^{2(s_c-1-\epsilon\sqrt2)}\\
&\ge(r-a{\rm cos}\beta)(1-e^{-2\epsilon_0}){\rm max}\{X_0,1\}\cdot\frac{2Y_0}{X_0+1}e^{-2-2\epsilon\sqrt2}\\
&\ge \frac{\epsilon}{100}(r+a{\rm cos}\beta),
\end{aligned}\]
where we use the mean value theorem and $\epsilon_0\ge \epsilon$. Then we can obtain the lower bound of $|\phi_{st}^{''}/(s-s_c)|$ in the same way as Part 1. First, under the assumption that $r\le C({\rm cosh}T)^7$, we have
\[\Big|\frac{\phi_{st}^{''}}{s-s_c}\Big|\ge \frac{C\epsilon(r+a{\rm cos}\beta)}{({\rm cosh}T)^2r(r^2({\rm cosh}T)^4)}\ge C\epsilon e^{-20T}. \]
Under the other assumption that ``$d_1\ge (C{\rm cosh}T)^{-1}\ {\rm and}\ r\ge C({\rm cosh}T)^7$'', noting that $|s-s_c|\le 1+\epsilon\sqrt2$ and the bound \eqref{lowbd} is still valid here, we get
\[\Big|\frac{\phi_{st}^{''}}{s-s_c}\Big|\ge Ce^{-10T}.\] So far we have finished the proof of all the lower bounds.
\end{proof}
\begin{proof}[Proof of Lemma \ref{lemma2}]
We only need to prove the upper bounds of mixed derivatives when $\alpha\ne Identity$, since the bounds for pure derivatives are well known in \cite{berard}, \cite{top} and we do not use them in this paper. For convenience, we denote
\[G(t,s)=(a{\rm cos}\beta-r)(e^{2s+2t}+d_2^2)+(a{\rm cos}\beta+r)(e^{2t}+d_1^2e^{2s}),\]
\[E(t,s)=A^2-16r^2e^{2s+2t}.\]Recalling the formula \eqref{phist}, we have $\phi_{st}^{''}=16re^{2s+2t}GE^{-3/2}$.
By induction it is not difficult to see that for any muti-index $\alpha=(\alpha_1,\alpha_2)$
\[D^\alpha\Big(\frac{G}{E^\gamma}\Big)=E^{-\gamma-|\alpha|}\sum\limits_{0\le |\beta_0|+\cdot\cdot\cdot+|\beta_{|\alpha|}|\le |\alpha|}{C_{\gamma,\alpha,\beta_0,...,\beta_{|\alpha|}}D^{\beta_0}G\cdot D^{\beta_1}E\cdot\cdot\cdot D^{\beta_{|\alpha|}}E},\]
where $|\alpha|=\alpha_1+\alpha_2$, and $C_{\gamma,\alpha,\beta_0,...,\beta_{|\alpha|}}$ are constants independent of $G$ and $E$.
Thus,
\[D^\alpha\phi_{st}^{''}=\frac{re^{2s+2t}}{E^{3/2+|\alpha|}}\sum\limits_{0\le |\beta_0|+\cdot\cdot\cdot+|\beta_{|\alpha|}|\le |\alpha|}{C_{\alpha,\beta_0,...,\beta_{|\alpha|}}D^{\beta_0}G\cdot D^{\beta_1}E\cdot\cdot\cdot D^{\beta_{|\alpha|}}E}.\]
From the condition that $\phi(t,s)\ge 2$, we have $A\ge 4({\rm cosh}2)re^{s+t}$. Thus,
\[A-4re^{s+t}\ge (4{\rm cosh}2-4)re^{s+t}.\]
If $r\ge C{\rm cosh}T$, then by \eqref{segment2}-\eqref{rlowbound}, \[E\ge (A-4re^{s+t})^2\ge Cr^2e^{2s+2t}\ge Cr^4({\rm cosh}T)^{-2}, \] \[|D^\alpha E|\le C_\alpha r^4({\rm cosh}T)^8,\quad |D^\alpha G|\le C_\alpha r^3({\rm cosh}T)^5.\]
Hence,
\[|D^\alpha\phi_{st}^{''}|\le \frac{C_\alpha r(r{\rm cosh}T)^2}{(r^4({\rm cosh}T)^{-2})^{3/2+|\alpha|}}r^3({\rm cosh}T)^5(r^4({\rm cosh}T)^8)^{|\alpha|}\le C_\alpha e^{(10|\alpha|+10)T},\]
If $r\le C{\rm cosh}T$, then by \eqref{segment2}-\eqref{rlowbound}, \[E\ge (A-4re^{s+t})^2\ge Cr^2e^{2s+2t}\ge  C({\rm cosh}T)^{-6}, \] \[|D^\alpha E|\le C_\alpha ({\rm cosh}T)^{12},\quad |D^\alpha G|\le C_\alpha ({\rm cosh}T)^8.\]
Therefore,
\[|D^\alpha\phi_{st}^{''}|\le \frac{C_\alpha ({\rm cosh}T)^5}{(({\rm cosh}T)^{-6})^{3/2+|\alpha|}}({\rm cosh}T)^8(({\rm cosh}T)^{12})^{|\alpha|}\le C_\alpha e^{(18|\alpha|+22)T}.\]\end{proof}
\begin{remark}The condition that $\alpha  \notin {\Gamma _{{{\rm T}_R}(\tilde \gamma )}}$ is essential in the proof of the lower bounds. However, the proof of the upper bounds only needs $2\le \phi\le T$.
\end{remark}
\section{Acknowledgement}
We would like to thank Professor C. Sogge for his guidance and patient discussions during this study. We are grateful to
him for pointing us to the role of the more favorable dispersion for the wave equation on hyperbolic space, which inspires us to get further improvements on an earlier version of this paper. It's our pleasure to thank Professor M. Blair for his helpful comments on the preprint and our colleagues X. Wang and Y. Xi for fruitful discussions. We also wish to  thank  the anonymous referee for valuable comments and suggestions.
\bibliography{3dER}

\bibliographystyle{plain}

\end{document}